\documentclass[10pt]{amsart}
\usepackage[utf8]{inputenc}
\usepackage{amscd}
\usepackage{amssymb}
\usepackage{hyperref}
\usepackage{pstricks}
\usepackage{empheq}
\usepackage{graphicx}
\usepackage{float}
\usepackage{todonotes}
\usepackage{subcaption}
\usepackage{xcolor}
\usepackage{textcomp}
\usepackage{cancel}
\usepackage[mathlines]{lineno}
\usepackage{comment}
\usepackage{ulem}
\newtheorem{theorem}{Theorem}
\newtheorem{proposition}{Proposition}
\newtheorem{lemma}{Lemma}
\newtheorem{definition}{Definition}
\newtheorem{corollary}{Corollary}

\newtheorem{remark}{Remark}

\numberwithin{equation}{section}
\numberwithin{lemma}{section}

\title[]{Conditions for uniform $h$--dichotomy in terms of uniform non criticality, expansiveness and via generalized Floquet theory}

\author[Elorreaga]{Heli Elorreaga}
\author[Robledo]{Gonzalo Robledo}
\author[Urrutia]{David Urrutia--Vergara}
\address{Departamento de Matem\'aticas, Universidad de Chile, Casilla 653, Santiago, Chile.}
\address{Departamento de Matem\'atica, Facultad de Ciencias, Universidad del Bio--Bio, Casilla 5-C, Concepci\'on, Chile}
\email{helorreaga@ubiobio.cl,grobledo@uchile.cl,david.urrutia@ug.uchile.cl}
\keywords{non autonomous differential equations, dichotomies, bounded growth, noncritical uniformity, expansiveness, group theory}
\subjclass{34D09,34A30,34C11}
\thanks{This research has been supported by the Chilean Agency for Research and Development, ANID-Chile through the grant FONDECYT Postdoctorado project 3240354 (H. Elorreaga), FONDECYT Regular project 1240361 (G. Robledo) and the doctoral scholarship 21241939 (D. Urrutia--Vergara).}

\begin{document}

\begin{abstract}
In this article, we complete the study of the equivalences between the properties of $h$--dichotomy, $h$--noncriticality and $h$--expansiveness
of a linear nonautonomous ODE system which had been initiated in a previous work. Moreover, we extend a result of the generalized Floquet theory developed by T.A. Burton and J.S. Muldowney by providing a necessary and sufficient condition for $h$--dichotomy. It should be noted that all the results have been obtained by using a characterization of the $h$--dichotomy by a group theory approach recently developed by J.F. Pe\~na and S. Rivera--Villagr\'an.
\end{abstract}

\maketitle

\section{Introduction}
The uniform $h$--\textit{dichotomy} is a property of non autonomous linear systems of differential
and difference equations which; roughly speaking; splits any non trivial solution in a contractive and an expansive
part which are dominated by a growth rate $h(\cdot)$ that will be formally described later.

In the recent years  we have witnessed a renewed interest in studying $h$-dichotomies \cite{CPR,DD,E-P-R,PV,Silva,WXZ} and this work revisits the uniform $h$--dichotomy
for non  autonomous linear systems of differential equations: 
\begin{equation}
\label{lin}
x'=A(t)x  \quad \textnormal{for any $t \in J:=(a_{0},+\infty)$},
\end{equation}
where $a_{0} \in \mathbb{R}\cup \{-\infty\}$ and the map $t\mapsto A(t)\in M_{n}(\mathbb{R})$ is continuous on $J$. From now on, a fundamental matrix of (\ref{lin}) will be denoted by $\Phi(t)$, while the corresponding transition matrix will be $\Phi(t,s)$. On the other hand, given
any norm $|\cdot|$ on $\mathbb{R}^{n}$, its induced matrix norm will be denoted by $||\cdot||$.

\begin{definition}
\label{GR}
A growth rate $h(\cdot)$ is a strictly increasing homeomorphism $h\colon J=(a_{0},+\infty)\to (0,+\infty)$. In addition, if $h$ is differentiable on $J$, we will say that $h$ is a differentiable growth rate.
\end{definition}

A formal definition of the uniform $h$--dichotomy is given by:
\begin{definition}
\label{UHD}
Given a growth rate $h\colon (a_{0},+\infty)\to (0,+\infty)$ and an interval $\mathcal{I}\subseteq J$, the linear system \eqref{lin} has a uniform $h$--dicohotomy on $\mathcal{I}$
if there exists a projector $P(t)$ and a pair of constants $K\geq 1$ and $\alpha>0$ such that
\begin{equation}
\label{Invariance}
P(t)\Phi(t,s)=\Phi(t,s)P(s) \quad \textnormal{for any $t,s\in \mathcal{I}$} ,
\end{equation}
and
\begin{equation}
\label{HD}
\left\{\begin{array}{rcl}
||\Phi(t,s)P(s)||  &  \leq & \displaystyle  K\left(\frac{h(t)}{h(s)}\right)^{-\alpha} \quad \textnormal{for any $t\geq s$ with $t,s\in \mathcal{I}$} \\\\
||\Phi(t,s)Q(s)||  &  \leq & \displaystyle K\left(\frac{h(s)}{h(t)}\right)^{-\alpha} \quad \textnormal{for any $s\geq t$ with $t,s\in \mathcal{I}$},
\end{array}\right.
\end{equation}
where $Q(\cdot)$ verifies $P(t)+Q(t)=I$ for any $t\in \mathcal{I}$.
\end{definition}

There exists an alternative characterization for the uniform $h$--dichotomy on $\mathcal{I}$ which is
carried out in terms of constants projectors. We refer to \cite[Prop.2]{E-P-R} for additional details:
\begin{proposition}
\label{ProyCte}
The system \eqref{lin} has a uniform $h$--dichotomy on $\mathcal{I}$ if and only if, given a fundamental matrix $\Phi(t)$, there exists a constant projector $P$ and constants $K\geq 1$ and $\alpha>0$
such that:
\begin{equation}
\label{eq:2.2}
\left\{\begin{array}{rccr}
||\Phi(t)P\Phi^{-1}(s)||&\leq K \left( \frac{h(t)}{h(s)}\right)^{-\alpha} & \textnormal{if} & t\geq s,\quad t,s\in \mathcal{I}\\
||\Phi(t)(I-P)\Phi^{-1}(s)|| &\leq K \left( \frac{h(s)}{h(t)}\right)^{-\alpha} & \textnormal{if} & t \leq s, \quad t,s \in \mathcal{I}.
\end{array}\right.
\end{equation}
\end{proposition}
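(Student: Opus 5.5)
The plan is to prove both implications by transporting the projector with the fundamental matrix. The guiding observation is that the invariance identity (\ref{Invariance}) forces $\Phi^{-1}(t)P(t)\Phi(t)$ to be constant on $\mathcal{I}$. Throughout I use $\Phi(t,s)=\Phi(t)\Phi^{-1}(s)$.

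\emph{Necessity.} Assume (\ref{lin}) has a uniform $h$--dichotomy on $\mathcal{I}$ with projector $P(\cdot)$, and fix any fundamental matrix $\Phi(t)$. Define $P:=\Phi^{-1}(s)P(s)\Phi(s)$ for some $s\in\mathcal{I}$. Rewriting (\ref{Invariance}) as
\[
P(t)\Phi(t)\Phi^{-1}(s)=\Phi(t)\Phi^{-1}(s)P(s)
\]
and multiplying on the left by $\Phi^{-1}(t)$ and on the right by $\Phi(s)$ gives
\[
\Phi^{-1}(t)P(t)\Phi(t)=\Phi^{-1}(s)P(s)\Phi(s).
\]
Hence $P$ does not depend on $s$. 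It is a projector because it is similar to the projector $P(s)$: indeed $P^{2}=\Phi^{-1}(s)P(s)^{2}\Phi(s)=P$. From $P(s)=\Phi(s)P\Phi^{-1}(s)$ we obtain
\[
\Phi(t,s)P(s)=\Phi(t)\Phi^{-1}(s)\Phi(s)P\Phi^{-1}(s)=\Phi(t)P\Phi^{-1}(s).
\]
The same computation gives $\Phi(t,s)Q(s)=\Phi(t)(I-P)\Phi^{-1}(s)$, since $Q(s)=I-P(s)=\Phi(s)(I-P)\Phi^{-1}(s)$. The estimates (\ref{HD}) therefore become exactly (\ref{eq:2.2}), with the same $K$ and $\alpha$.

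\emph{Sufficiency.} Assume $\Phi(t)$ and a constant projector $P$ satisfy (\ref{eq:2.2}). Define $P(t):=\Phi(t)P\Phi^{-1}(t)$ and $Q(t)=I-P(t)=\Phi(t)(I-P)\Phi^{-1}(t)$. Each $P(t)$ is a projector, being similar to $P$. Invariance follows from
\[
P(t)\Phi(t,s)=\Phi(t)P\Phi^{-1}(s)=\Phi(t,s)P(s).
\]
The identities displayed above again turn (\ref{eq:2.2}) into (\ref{HD}).

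One point needs care: Proposition~\ref{ProyCte} says ``given a fundamental matrix'', so the conclusion should hold for every fundamental matrix. Any other fundamental matrix has the form $\Psi(t)=\Phi(t)C$ with $C$ invertible. Setting $\tilde P=C^{-1}PC$ gives
\[
\Psi(t)\tilde P\Psi^{-1}(s)=\Phi(t)P\Phi^{-1}(s),
\]
so the bounds in (\ref{eq:2.2}) are unchanged under this change of fundamental matrix. The only genuine step is the computation showing that $\Phi^{-1}(t)P(t)\Phi(t)$ is constant. Everything else is algebraic substitution, and I do not expect a real obstacle.
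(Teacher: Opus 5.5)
Your proof is correct and is the standard argument. Invariance makes $\Phi^{-1}(s)P(s)\Phi(s)$ independent of $s$, and conversely $P(t):=\Phi(t)P\Phi^{-1}(t)$ is invariant, so \eqref{HD} and \eqref{eq:2.2} are literally the same inequalities; the paper itself gives no proof here and only refers to \cite[Prop.~2]{E-P-R}. Your closing remark on changing the fundamental matrix is correct but not needed, since your necessity argument already starts from an arbitrary $\Phi(t)$.
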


The uniform $h$--dichotomies generalizes the exponential dichotomy introduced by O. Perron \cite{Perron}, 
which considers the growth rate $h(t)=e^{t}$ and $J=\mathbb{R}$. In fact, R. Martin Jr. \cite{Martin}, J.S. Muldowney \cite{Muldowney} and M. Pinto $\&$ R. Naulin \cite{Pinto92,NP94} considered different growth rates.

In general, the study of $h$-dichotomies has been carried out in an attempt to emulate the results already known for the exponential dichotomy such as the topological conjugacy between the solutions of (\ref{lin}) with a quasilinear perturbation \cite{Reinfelds}, robustness of the dichotomy under small perturbations \cite{NP95,ZFY} and spectral theories arising from the dichotomy \cite{Silva,WPX}. 

The main goals of this article are two: the first one is to achieve the study initiated in \cite{E-P-R} where equivalences between uniform $h$--dichotomy with the properties of \textit{noncritical uniformity}
and \textit{expansiveness} (formal definitions will be given later) was established for the case $\mathcal{I}=[h^{-1}(1),+\infty)$ and now will be stated for $\mathcal{I}=J$. The second objective of this article is to obtain necessary and sufficient conditions for uniform $h$--dichotomy in the framework of the generalized Floquet theory developed by T.A. Burton and J.S. Muldowney in \cite{Burton}.

The section 2 reviews basic properties and recent results about the uniform $h$--dichotomy \cite{CPR,E-P-R,PV}. 
The section 3 provides a complete study of the equivalences between the properties of uniform $h$--dichotomy,
uniform $h$--noncriticality and $h$--expansiveness for the case $\mathcal{I}=J$. The section 4 expands some results of the generalized Floquet
theory and provides a criterion for uniform $h$--dichotomy on $J$.

\textit{Notation:} Given a linear operator $L$ its range will be denoted by $\mathcal{R}L$ whereas its kernel
will be denoted by $\mathcal{N}L$. As usual, the identity matrix will be denoted by $I$, the dimension of
a vector subspace $W\subset \mathbb{R}^{n}$ will be denoted by $\textnormal{Dim}\, W$ while
the direct sum of two subspaces is described by $\oplus$.

\section{The uniform $h$--dichotomy:  a short review}
Given any differentiable
growth rate $h(\cdot)$ and $\alpha>0$ it is easy to construct illustrative examples of systems having a uniform $h$--dichotomy as follows:

\begin{equation}
\label{exemple}
\left(\begin{array}{c}
\dot{x}_{1}\\
\dot{x}_{2}
\end{array}\right)=
\left[\begin{array}{cc}
\displaystyle -\alpha \frac{h'(t)}{h(t)}  &  0 \\
0 & \displaystyle \alpha \frac{h'(t)}{h(t)}
\end{array}\right]
\left(\begin{array}{c}
x_{1}\\
x_{2}
\end{array}\right) \quad \textnormal{for any $t\in J$}.
\end{equation}

In fact, notice that (\ref{Invariance}) and (\ref{HD}) are verified with $K=1$ and
\begin{equation}
\label{dugma}
\Phi(t)=
\left[\begin{array}{cc}
\frac{1}{h(t)^{\alpha}}  &  0 \\
0 & h(t)^{\alpha}
\end{array}\right]  \quad \textnormal{and} \quad
P(t)=\left[\begin{array}{cc}
1 & 0 \\
0 & 0
\end{array}\right] \quad \textnormal{for any $t\in J$}.
\end{equation}

Now, notice that if the linear system (\ref{lin}) has uniform $h$--dichotomy on $\mathcal{I}\subset J$ it is
straightforward to see that any nontrivial solution $t\mapsto x(t,t_{0},x_{0})$ of (\ref{lin}) passing through $x_{0}$ at $t=t_{0}\in \mathcal{I}$ can be decomposed as follows:
\begin{equation}
\label{splitting}
x(t,t_{0},x_{0})=\Phi(t,t_{0})x_{0}=\underbrace{\Phi(t,t_{0})P(t_{0})x_{0}}_{:=x^{+}(t,t_{0},x_{0})}+\underbrace{\Phi(t,t_{0})Q(t_{0})x_{0}}_{:=x^{-}(t,t_{0},x_{0})}.
\end{equation}

The following result, stated without proof in \cite{E-P-R}, is a direct consequence from $h$--dichotomy on $\mathcal{I}$ combined with (\ref{splitting}):
\begin{lemma}
\label{SPLIT}
If \eqref{lin} has a uniform $h$--dichotomy on $\mathcal{I}$, then any non trivial solution of \eqref{lin} has the following forward and backward behavior:
\begin{itemize}
\item[a)] If $t,t_{0}\in \mathcal{I}$ and $t\geq t_{0}$, the maps $x^{+}(\cdot)$ and $x^{-}(\cdot)$ have the following estimations: 
\begin{equation}
\label{ae1}
|x^{+}(t,t_{0},x_{0})|\leq K\left(\frac{h(t_{0})}{h(t)}\right)^{\alpha}|x_{0}|  \,\, \textnormal{and} \,\,\,
\frac{|Q(t_{0})x_{0}|}{K} \left( \frac{h(t)}{h(t_{0})}\right)^{\alpha}\leq |x^{-}(t,t_{0},x_{0})|,
\end{equation}
\item[b)] If $t,t_{0}\in \mathcal{I}$ and $t\in (a_{0},t_{0})$, the maps $x^{+}(\cdot)$ and $x^{-}(\cdot)$ have the following estimations:
\begin{equation}
\label{ae2}
\left(\frac{h(t_{0})}{h(t)}\right)^{\alpha}\frac{|P(t_{0})x_{0}|}{K} \leq |x^{+}(t,t_{0},x_{0})| \quad \textnormal{and} \quad
|x^{-}(t,t_{0},x_{0})|\leq K\left(\frac{h(t)}{h(t_{0})}\right)^{\alpha}.
\end{equation}
\end{itemize}
\end{lemma}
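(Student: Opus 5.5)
The plan is to apply the dichotomy estimates \eqref{HD} directly to the decomposition \eqref{splitting}. Two devices do all the work. The first is the invariance \eqref{Invariance}, together with its consequence for $Q(\cdot)=I-P(\cdot)$. The second is the idempotence $P(t_0)^2=P(t_0)$, $Q(t_0)^2=Q(t_0)$, which lets me insert an extra projector wherever an estimate needs one. The two upper bounds follow from the inequalities as stated. The two lower bounds come from reversing time and using that $\Phi(t_0,t)=\Phi(t,t_0)^{-1}$.

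\textbf{Upper bounds.} For $t\ge t_0$, write $x^{+}(t,t_0,x_0)=\Phi(t,t_0)P(t_0)x_0$. The first line of \eqref{HD} gives $|x^{+}|\le K\bigl(h(t)/h(t_0)\bigr)^{-\alpha}|x_0|=K\bigl(h(t_0)/h(t)\bigr)^{\alpha}|x_0|$, which is the first half of \eqref{ae1}. For $t<t_0$, so that $t_0\ge t$, the second line of \eqref{HD} with $s=t_0$ gives $|x^{-}(t,t_0,x_0)|\le K\bigl(h(t)/h(t_0)\bigr)^{\alpha}|x_0|$. This is the second half of \eqref{ae2}; the factor $|x_0|$ is evidently missing in the statement and I would add it.

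\textbf{Lower bounds.} For $t\ge t_0$, use $Q(t_0)x_0=Q(t_0)^2x_0$ and the invariance relation $Q(t_0)\Phi(t_0,t)=\Phi(t_0,t)Q(t)$ to write
\[
Q(t_0)x_0=\Phi(t_0,t)Q(t)\,\Phi(t,t_0)Q(t_0)x_0=\Phi(t_0,t)Q(t)\,x^{-}(t,t_0,x_0).
\]
Since $t\ge t_0$, the second line of \eqref{HD} applied to the pair $(t_0,t)$ gives $\|\Phi(t_0,t)Q(t)\|\le K\bigl(h(t)/h(t_0)\bigr)^{-\alpha}$. Hence $|Q(t_0)x_0|\le K\bigl(h(t_0)/h(t)\bigr)^{\alpha}|x^{-}|$, and rearranging gives the second half of \eqref{ae1}. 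The case $t<t_0$ is symmetric. Write $P(t_0)x_0=\Phi(t_0,t)P(t)\,x^{+}(t,t_0,x_0)$ and apply the first line of \eqref{HD} with the roles of the two times exchanged ($t_0\ge t$). This gives $|P(t_0)x_0|\le K\bigl(h(t_0)/h(t)\bigr)^{-\alpha}|x^{+}|$, which is the first half of \eqref{ae2}.

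\textbf{Main obstacle.} The only delicate step is justifying that $Q(t)$ commutes with the transition matrix in the way used above, namely $Q(t_0)\Phi(t_0,t)=\Phi(t_0,t)Q(t)$. This follows from \eqref{Invariance} by subtracting it from $\Phi(t,s)=\Phi(t,s)$. Together with the projector identities $P(t)^2=P(t)$ and $Q(t)^2=Q(t)$, this is all the lower bounds require. One could instead work throughout with the constant projector of Proposition \ref{ProyCte}, but the direct route above avoids that.
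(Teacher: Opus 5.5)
Your proof is correct and is essentially the paper's argument: the upper bounds come straight from \eqref{HD}, and the lower bounds come from the identities $Q(t_0)x_0=\Phi(t_0,t)Q(t)\,x^{-}(t,t_0,x_0)$ and $P(t_0)x_0=\Phi(t_0,t)P(t)\,x^{+}(t,t_0,x_0)$, obtained from idempotence and invariance exactly as you do. You are also right that the factor $|x_0|$ is missing from the right-hand inequality of \eqref{ae2}, and your check that $Q(\cdot)$ inherits the invariance property fills in a step the paper uses without comment.
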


\begin{proof}
The left inequality of (\ref{ae1}) and the right inequality of (\ref{ae2}) follows directly from the inequalities (\ref{HD}).  
Now, if $t\geq t_{0}$, by using (\ref{Invariance}) combined with $Q^{2}(t_{0})=Q(t_{0})$ it follows that
\begin{displaymath}
\begin{array}{rcl}
Q(t_{0})x_{0}&=&\Phi(t_{0},t)\Phi(t,t_{0})Q^{2}(t_{0})x_{0}\\\\
&=&\Phi(t_{0},t)Q(t)\Phi(t,t_{0})Q(t_{0})x_{0}\\\\
&=&\Phi(t_{0},t)Q(t)x^{-}(t,t_{0},x_{0}),
\end{array}
\end{displaymath}
therefore by (\ref{HD}) we can deduce that
$$
|Q(t_{0})x_{0}|\leq K\left(\frac{h(t)}{h(t_{0})}\right)^{-\alpha}|x^{-}(t,t_{0},x_{0})|,
$$
and the right inequality of (\ref{ae1}) follows. Finally, the left inequality of (\ref{ae2}) can be proved similarly
by noticing that
$$
P(t_{0})x_{0}=\Phi(t_{0},t)P(t)x^{+}(t,t_{0},x_{0}).
$$
\end{proof}

\begin{lemma}
\label{FDIA}
If the linear system \eqref{lin} has an uniform $h$--dichotomy on $J$, the unique bounded solution is the trivial one.
\end{lemma}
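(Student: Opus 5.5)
We will argue by contradiction, using the splitting \eqref{splitting} together with the forward and backward estimates of Lemma \ref{SPLIT} on the whole interval $\mathcal{I}=J$. Suppose $x(t)=x(t,t_{0},x_{0})$ is a bounded solution of \eqref{lin} on $J$ with $x_{0}\neq 0$, say $|x(t)|\leq M$ for all $t\in J$. We will show separately that $Q(t_{0})x_{0}=0$ and that $P(t_{0})x_{0}=0$. Since $P(t_{0})+Q(t_{0})=I$, this forces $x_{0}=0$, which is the desired contradiction.

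\emph{Step 1: the unstable component vanishes.} For $t\geq t_{0}$ we use the invariance \eqref{Invariance} to write $Q(t)x(t)=x^{-}(t,t_{0},x_{0})$. Hence $|x^{-}(t,t_{0},x_{0})|\leq \|Q(t)\|\,M$. On the other hand, the right inequality of \eqref{ae1} gives
\[
|x^{-}(t,t_{0},x_{0})|\geq \frac{|Q(t_{0})x_{0}|}{K}\left(\frac{h(t)}{h(t_{0})}\right)^{\alpha}.
\]
The right-hand side tends to $+\infty$ as $t\to+\infty$ whenever $Q(t_{0})x_{0}\neq 0$, because $h$ is a homeomorphism onto $(0,+\infty)$.

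\emph{Step 2: the stable component vanishes.} We argue symmetrically for $t\to a_{0}^{+}$. Here $h(t)\to 0$, since $h$ is an increasing homeomorphism onto $(0,+\infty)$. This is precisely where $\mathcal{I}=J$ is needed, since we must be able to go all the way down to $a_{0}$. The left inequality of \eqref{ae2} gives
\[
|x^{+}(t,t_{0},x_{0})|\geq \frac{|P(t_{0})x_{0}|}{K}\left(\frac{h(t_{0})}{h(t)}\right)^{\alpha}\to+\infty
\]
unless $P(t_{0})x_{0}=0$, while $x^{+}(t,t_{0},x_{0})=P(t)x(t)$.

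\emph{Main obstacle.} The delicate point is that $\|P(t)\|$ and $\|Q(t)\|$ need not be bounded a priori, so the bounds $|x^{\pm}|\leq\|P(t)\|M$ or $\|Q(t)\|M$ do not immediately contradict the growth above. To circumvent this, we will pass to the constant-projector version, Proposition \ref{ProyCte}, with $x(t)=\Phi(t)\xi$. Writing $\xi=P\xi+(I-P)\xi$, we estimate, for $s\geq t_{0}$,
\[
\Phi(t_{0})(I-P)\xi=\Phi(t_{0})(I-P)\Phi^{-1}(s)\,x(s),
\]
so that
\[
|\Phi(t_{0})(I-P)\xi|\leq K\left(\frac{h(s)}{h(t_{0})}\right)^{-\alpha}M\to 0 \quad \textnormal{as } s\to+\infty.
\]
Thus $(I-P)\xi=0$. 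Similarly, for $s\leq t_{0}$,
\[
\Phi(t_{0})P\xi=\Phi(t_{0})P\Phi^{-1}(s)\,x(s),
\]
so that
\[
|\Phi(t_{0})P\xi|\leq K\left(\frac{h(t_{0})}{h(s)}\right)^{-\alpha}M\to 0 \quad \textnormal{as } s\to a_{0}^{+}.
\]
Thus $P\xi=0$. Consequently $\xi=0$, and so $x\equiv 0$. In this form only the dichotomy bounds \eqref{eq:2.2} are used, and the issue of boundedness of the projectors disappears.
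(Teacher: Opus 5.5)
Your final argument is correct, but it runs in the opposite direction from the paper's. The paper argues by growth. For a nontrivial solution it combines the two-sided estimates of Lemma~\ref{SPLIT} with the triangle inequality: $|x|\geq|x^{-}|-|x^{+}|$ as $t\to+\infty$, and $|x|\geq|x^{+}|-|x^{-}|$ as $t\to a_{0}^{+}$. Hence $x$ blows up forward when $Q(t_{0})x_{0}\neq 0$ and backward when $P(t_{0})x_{0}\neq 0$. The paper phrases this as unboundedness on both half-lines, which holds only when both components are nonzero; since $x_{0}\neq 0$, at least one is, so the conclusion stands.

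You argue from boundedness instead. You write each component of $\xi$ as a dichotomy operator applied to the bounded $x(s)$, and send $s$ to the appropriate endpoint. This is the same mechanism the paper uses in the second halves of Lemmas~\ref{ISF} and~\ref{ISB}. Your route uses only the upper bounds \eqref{eq:2.2}, treats the two components symmetrically and needs no case distinction. The paper's route additionally gives the blow-up rate $h(t)^{\pm\alpha}$ at the relevant end.

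The ``main obstacle'' you describe is not real, though. Taking $t=s$ in \eqref{HD} gives $\|P(s)\|=\|\Phi(s,s)P(s)\|\leq K$ and $\|Q(s)\|\leq K$ for every $s\in J$. So your Steps~1 and~2 already close: $|x^{-}(t,t_{0},x_{0})|=|Q(t)x(t)|\leq KM$ contradicts the lower bound in \eqref{ae1} unless $Q(t_{0})x_{0}=0$, and likewise $|x^{+}(t,t_{0},x_{0})|=|P(t)x(t)|\leq KM$ contradicts the lower bound in \eqref{ae2} unless $P(t_{0})x_{0}=0$.

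Even without that bound, the passage to Proposition~\ref{ProyCte} is unnecessary. By \eqref{Invariance}, $Q(t_{0})x_{0}=\Phi(t_{0},s)Q(s)x(s)$, so $|Q(t_{0})x_{0}|\leq K\bigl(h(s)/h(t_{0})\bigr)^{-\alpha}M$ for $s\geq t_{0}$. This is exactly your estimate, written with the variable projectors.
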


\begin{proof}
Let $t\mapsto x(t):=x(t,t_{0},x_{0})$ be a nontrivial solution of (\ref{lin}) passing through $x_{0}$ at $t=t_{0}$. We 
can prove that this solution cannot be bounded on $J$. In fact, by Lemma \ref{SPLIT} we know that this solution can be splitted
as described in (\ref{splitting}).  Moreover, by (\ref{ae1}) we have that $x^{+}(t,t_{0},x_{0})\to 0$
and $|x^{-}(t,t_{0},x_{0})|\to +\infty$ when $t\to +\infty$, thus $t\mapsto x(t)$ cannot be bounded on $[t_{0},+\infty)$. Similarly, by (\ref{ae2})
we also have that $|x^{+}(t,t_{0},x_{0})|\to + \infty$
and $x^{-}(t,t_{0},x_{0})\to 0$ when $t\to a_{0}^{+}$, therefore $t\mapsto x(t)$ cannot be bounded on $(a_{0},t_{0}]$ and the Lemma follows.
\end{proof}

\subsection{Uniform $h$--dichotomies from a group theory perspective}
An innovative approach introduced by J.F. Pe\~na and S. Rivera--Villagr\'an in \cite{PV} proved that, for any growth rate $h\colon J\to (a_{0},+\infty)$, 
a topological and totally ordered abelian group $(J,*)$ can be constructed by considering the following operation:
\begin{equation}
\label{LCI}
\begin{array}{rcl}
J \times J & \to &  J  \\
(t,s) &\mapsto &  t\ast s:=h^{-1}\left(h(t)h(s)\right).
\end{array} 
\end{equation}
Moreover, the unit element and the inverse for any $t\in J$ are respectively defined by
\begin{equation}
\label{N-I}
e_{\ast}:=h^{-1}(1) \quad \textnormal{and} \quad
t^{\ast -1}:=h^{-1}\left(\frac{1}{h(t)}\right).
\end{equation}

As any abelian group is a $\mathbb{Z}$--module by considering
the external composition law
\begin{equation*}
%\label{LCE-bis}
\begin{array}{rcl}
\mathbb{Z} \times \mathbb{R} & \to &  \mathbb{R}  \\
(k,t) &\mapsto &  t^{*k}:=\left\{\begin{array}{rcl}
 \underbrace{t \ast \cdots \ast t}_{k-\textnormal{times}} &\textnormal{if}& k>0 \\
 e_{\ast}  &\textnormal{if}&  k=0 \\
 \underbrace{t^{\ast -1} \ast \cdots \ast t^{\ast -1}}_{k-\textnormal{times}} &\textnormal{if}& k<0.
 \end{array}\right.
\end{array} 
\end{equation*}

By using recursively the identities (\ref{N-I}) combined with the fact that
$h(\cdot)$ is an homeomorphism we can see that
$$
t^{*k}=h^{-1}(h(t^{*k}))=h^{-1}(h(t)^{k}) \quad \textnormal{for any $k\in \mathbb{Z}$},
$$
and the above external composition law can be revisited as
\begin{equation}
\label{LCE-bis2}
\begin{array}{rcl}
\mathbb{Z} \times J & \to &  J  \\
(k,t) &\mapsto &  t^{*k}=h^{-1}\left(h(t)^{k}\right),
\end{array} 
\end{equation}
which leads to the property
\begin{equation}
\label{LP}
h(t^{*k})=h(t)^{k}  \quad \textnormal{for any $k\in \mathbb{Z}$}.
\end{equation}

The following identities for any pair $t,s\in J$ are stated in \cite{E-P-R}:
\begin{subequations}
  \begin{empheq}[left=\empheqlbrace]{align}
&
h(t\ast s)=h(t)h(s) \quad \textnormal{and} \quad h(s^{\ast -1})=\frac{1}{h(s)}, \label{group0} \\
&
(t\ast s)^{\ast -1}=h^{-1}\left(\frac{1}{h(t\ast s)} \right)=h^{-1}\left(\frac{1}{h(t)h(s)} \right), \label{group1} \\
& t\ast s^{\ast -1}=t\ast h^{-1}\left(\frac{1}{h(s)}\right)= h^{-1}\left(\frac{h(t)}{h(s)}\right), 
\label{group2} \\
&
(t\ast s^{\ast -1})^{\ast -1}=h^{-1}\left(\frac{1}{h(t)h(s^{\ast -1})}\right)=h^{-1}\left(\frac{h(s)}{h(t)}\right)=s\ast t^{\ast-1}. \label{group3}
 \end{empheq}
 \end{subequations}

In addition, in \cite{E-P-R} it was noticed that the group $(J,\ast)$ is also totally ordered,
where the order $\leq_{\ast}$ on $J$:
\begin{equation}
\label{order}
s\leq_{\ast} t  \quad \textnormal{if and only if} \quad   e_{\ast} \leq t\ast s^{\ast -1}   
\end{equation}
verifies that 
\begin{equation}
\label{order2}
t \leq_{\ast} s \quad \textnormal{if and only if} \quad t\leq s \quad \textnormal{for any $t,s\in J$},
\end{equation}

It was also noticed in \cite{E-P-R} that, as the group is ordered, the following absolute value $|\cdot|_{\ast}\colon J\to [e_{\ast},+\infty)$ can be introduced:
\begin{equation}
\label{abs}
|t|_{\ast}=\left\{\begin{array}{ccr}
t &\textnormal{if}& e_{*}\leq t, \\
t^{\ast -1} &\textnormal{if}& \,\, t<e_{\ast},
\end{array}\right.
\end{equation}
moreover, as $(J,\ast)$ is abelian, the triangle inequality is satisfied:
\begin{equation}
|t\ast s|_{\ast} \leq |t|_{\ast} \ast |s|_{\ast}.
\end{equation}

These facts allow to define a distance $d \colon (J,\ast) \times (J,\ast) \to   ([e_{\ast},+\infty),\ast)$
as follows:
\begin{equation}
\label{distance}
d(t,s):=|t\ast s^{\ast -1}|_{\ast}.
\end{equation}

It will be useful to note that, given $e_{\ast}<L$ it follows that
\begin{equation}
\label{EVA}
|t\ast s^{\ast -1}|_{\ast}\leq L  \iff L^{\ast-1}\leq  t\ast s^{\ast -1}\leq L \iff \frac{1}{h(L)}\leq \frac{h(t)}{h(s)} \leq h(L) .
\end{equation}

As pointed out in \cite[Sect.2]{E-P-R}, the group $(J,\ast)$ and his properties makes possible two alternative characterizations for
the uniform $h$--dichotomy. Firstly, the property (\ref{group2}) allow us to see the uniform $h$--dichotomy from the following perspective:
\begin{equation}
\label{alt-hd}
\left\{\begin{array}{rcl}
||\Phi(t,s)P(s)||  &  \leq & \displaystyle  Kh(t\ast s^{\ast -1})^{-\alpha} \quad \textnormal{for any $t\geq s$  with $t,s\in \mathcal{I}$}, \\\\
||\Phi(t,s)Q(s)||  &  \leq & \displaystyle K h(s\ast t^{\ast -1})^{-\alpha} \quad \textnormal{for any $s\geq t$ with $t,s\in \mathcal{I}$}.
\end{array}\right.
\end{equation}

Secondly, notice that (\ref{order}) combined (\ref{distance}) allow to see uniform $h$--dichotomy from the alternative perspective:
\begin{equation}
\label{alt-hd2}
\left\{\begin{array}{rcl}
||\Phi(t,s)P(s)||  &  \leq & \displaystyle  Kh( d(t,s))^{-\alpha} \quad \textnormal{for any $t\geq s$ with $t,s\in \mathcal{I}$} \\\\
||\Phi(t,s)Q(s)||  &  \leq & \displaystyle K h(d(t,s))^{-\alpha} \quad \textnormal{for any $s\geq t \geq a$ with $t,s\in \mathcal{I}$}.
\end{array}\right.
\end{equation}

\subsection{Uniform bounded growth}
A careful reading of the dichotomies literature shows its strong relation with the properties of bounded growth and decay
which, tailored for an specific growth rate $h(\cdot)$, are defined as follows: 
\begin{definition}
Given a growth rate $h\colon J\to (0,+\infty)$ and its corresponding group $(J,*)$, the linear system \eqref{lin} has:
\begin{itemize}
\item[a)]  A uniform \textbf{bounded $h$--growth} on $\mathcal{I}\subseteq J$ if for each $T>e_{\ast}$ there exists
$C_{T}\geq 1$ such that any solution $t\mapsto x(t)$ verifies 
\begin{equation}
\label{BG}    
|x(t)|\leq C_{T}|x(s)| \quad \textnormal{for any $t\in [s,s\ast T]\cap \mathcal{I}$}.
\end{equation}
\item[b)] A  uniform \textbf{bounded $h$--decay} on $\mathcal{I}\subseteq J$ if for each $T>e_{\ast}$ there exists $C_T\geq1$ such that any solution  $t\mapsto x(t)$ verifies
    \begin{equation}\label{BD}
        |x(t)|\leq C_T|x(s)| \quad \textnormal{for any $t\in[s\ast T^{\ast-1}, s]\cap \mathcal{I}$}.
    \end{equation}
\item[c)]  A uniform \textbf{bounded $h$--growth and $h$--decay} on $\mathcal{I}\subseteq J$ if for each $e_{\ast}<T$ there exists $C_T\geq1$ such that any solution  $t\mapsto x(t)$ verifies
    \begin{equation}\label{BGD}
        |x(t)|\leq C_T|x(s)| \quad \textnormal{for any $t\in[s\ast T^{\ast-1}, s\ast T]\cap \mathcal{I}$}.
    \end{equation}  
\end{itemize}
\end{definition}

The characterization (\ref{LCE-bis2}) allow us to
make a partition of $\mathcal{I}$
$$
\mathcal{I}=\bigcup\limits_{k\in \mathbb{Z}}I_{k}  \quad \textnormal{where $I_{k}=[s\ast T^{\ast (k-1)},s\ast T^{\ast k})$},
$$
and, as noticed in \cite{E-P-R} by using (\ref{abs}) combined with (\ref{distance}) and $e_{\ast}<T$, we can verify that the above partition of $J$ is uniform since is composed by intervals of large $T$:
\begin{displaymath}
d(s\ast T^{\ast k},s\ast T^{\ast (k-1)})=d(T\ast s\ast T^{\ast (k-1)},s\ast T^{\ast (k-1)})=T.  
\end{displaymath}

The above mentioned uniform partition was key to prove the following result:
\begin{proposition}\cite[Lemmas 3.1,3.2,3.3]{E-P-R}
The linear system \eqref{lin} has:
\begin{itemize}
    \item[a)] A uniform bounded $h$--growth on $\mathcal{I}$ if and only if there exist 
$K_{0}\geq 1$ and $\beta \geq 0$ such that
\begin{equation}
\label{equivalencia}
||\Phi(t,s)|| \leq K_{0}\left(\frac{h(t)}{h(s)}\right)^{\beta}=K_{0}h(t\ast s^{\ast -1})^{\beta} \quad \textnormal{for any $t\geq s$ with $t,s\in \mathcal{I}$.}
\end{equation}
    \item[b)]  A uniform bounded $h$--decay on $\mathcal{I}$ if and only if there exist 
$K_{0}\geq 1$ and $\beta\geq0$ such that
\begin{equation}
\label{equivalenciaBD}
||\Phi(t,s)|| \leq K_{0}\left(\frac{h(s)}{h(t)}\right)^{\beta} \quad \textnormal{for any $s\geq t$ with $t,s\in \mathcal{I}$.} 
\end{equation}
\item[c)]   A uniform bounded $h$--growth and $h$--decay on $\mathcal{I}$ if and only if there exist 
$K_{0}\geq 1$ and $\beta\geq0$ such that
\begin{equation}
\label{equivalenciaBGD}
||\Phi(t,s)|| \leq K_{0}h(|t\ast s^{\ast-1}|_{\ast})^{\beta} \quad \textnormal{for any $t, s\in \mathcal{I}$}. 
\end{equation}
\end{itemize}
\end{proposition}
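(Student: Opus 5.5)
For part a), the implication from \eqref{equivalencia} to uniform bounded $h$--growth is direct. Fix $T>e_{\ast}$ and take $t\in[s,s\ast T]\cap\mathcal{I}$. By \eqref{order2} and \eqref{group0} we get $1\leq h(t)/h(s)\leq h(T)$. Hence $|x(t)|=|\Phi(t,s)x(s)|\leq K_{0}h(T)^{\beta}|x(s)|$, and we may take $C_{T}=K_{0}h(T)^{\beta}$.

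For the converse we use the uniform partition. Fix any $T>e_{\ast}$, say the unique $T$ with $h(T)=e$, and let $C:=C_{T}\geq 1$ be the corresponding constant. Since \eqref{BG} holds for every solution, it is equivalent to $\|\Phi(t,s)\|\leq C$ for $t\in[s,s\ast T]\cap\mathcal{I}$. Now take $t\geq s$ in $\mathcal{I}$ and let $k\geq 0$ be the integer with $t\in[s\ast T^{\ast k},s\ast T^{\ast(k+1)})$. By \eqref{LP} and \eqref{group0} this means $h(T)^{k}\leq h(t)/h(s)<h(T)^{k+1}$, so $k\leq \ln(h(t)/h(s))/\ln h(T)$. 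We then factor
\[
\Phi(t,s)=\Phi(t,s\ast T^{\ast k})\prod_{j=1}^{k}\Phi(s\ast T^{\ast j},s\ast T^{\ast(j-1)}).
\]
Each intermediate point lies between $s$ and $t$, so it belongs to the interval $\mathcal{I}$, and each factor covers a step of $h$--length at most $T$. This gives
\[
\|\Phi(t,s)\|\leq C^{k+1}\leq C\left(\frac{h(t)}{h(s)}\right)^{\ln C/\ln h(T)},
\]
which is \eqref{equivalencia} with $K_{0}=C$ and $\beta=\ln C/\ln h(T)\geq 0$. The only point needing care is that the partition points stay inside $\mathcal{I}$. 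This holds because $\mathcal{I}$ is an interval and the points lie between $s$ and $t$, so no further argument is required.

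Part b) is symmetric: for $s\geq t$ we partition $[t,s]$ backwards from $s$ using the points $s\ast T^{\ast(-j)}$ and apply \eqref{BD} on each piece. Part c) follows by combining a) and b). In one direction, if \eqref{equivalenciaBGD} holds, then for $t\geq s$ we have $|t\ast s^{\ast-1}|_{\ast}=t\ast s^{\ast-1}$, and for $t<s$ we have $|t\ast s^{\ast-1}|_{\ast}=s\ast t^{\ast-1}$ by \eqref{group3}, which recovers \eqref{equivalencia} and \eqref{equivalenciaBD}. In the other direction, from a) and b) we take the larger of the two constants $K_{0}$ and the larger $\beta$, noting that $h(|\cdot|_{\ast})\geq 1$ so raising $\beta$ only weakens the bound. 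Finally, \eqref{BGD} is clearly equivalent to the conjunction of \eqref{BG} and \eqref{BD}, taking the maximum of the two constants $C_{T}$.
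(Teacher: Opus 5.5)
Your proof is correct and follows essentially the route the paper points to for this result, which it imports from \cite{E-P-R} and attributes to the uniform partition. You split $[s,t]$ into pieces of $h$--length $T$, factor $\Phi(t,s)$ telescopically to get $\|\Phi(t,s)\|\leq C_{T}^{k+1}$, and use $k\leq \ln(h(t)/h(s))/\ln h(T)$ to convert this into the power of $h(t)/h(s)$; b) is done symmetrically and c) by combining a) and b) (the product should be ordered with decreasing $j$, which does not affect the norm bound).
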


\subsection{Uniform $h$--noncriticality and $h$--expansiveness}
In this subsection we will recall two properties which are essential
to have alternative characterizations of the $h$--dichotomy.

\begin{definition}
\label{NCUTR}
The system \eqref{lin} is uniformly $h$--noncritical on an interval $\mathcal{I}\subseteq J$ if there exists $T>e_{\ast}$
and $\theta\in (0,1)$ such that any solution $t\mapsto x(t)$ of \eqref{lin} satisfies
    \begin{equation}
    \label{NCP}
    \begin{array}{rcl}
        |x(t)|\leq \theta\sup\limits_{|u\ast t^{\ast-1}|_{\ast}\leq T}|x(u)| &= & \theta\sup\{|x(u)|\colon T^{\ast-1}<u\ast t^{\ast-1}\leq T \}\\\\
        &=&  \theta\sup\{|x(u)|\colon t\ast T^{\ast-1}<u\leq t\ast T \},
    \end{array}
    \end{equation}
    for all $t$ such that $[t\ast T^{\ast-1}, t\ast T]\subset \mathcal{I}$.
\end{definition}

It will be useful to note that:
\begin{itemize}
\item[$\bullet$] If $\mathcal{I}=J^{+}:=[e_{\ast},+\infty)$, the inequality (\ref{NCP}) must be verified for any $t\geq T$, which coincides with the Definition stated in \cite{E-P-R}.
\item[$\bullet$] If $\mathcal{I}=J^{-}:=(a_{0},e_{\ast}]$, the inequality (\ref{NCP}) must be verified for any  $t\in (a_{0}\ast T,T^{\ast-1}]$.
\item[$\bullet$] If $\mathcal{I}=J$, the inequality (\ref{NCP}) must be verified for any $t\in J$.
\end{itemize}

\begin{remark}
\label{ANCU}
It direct to see that if \eqref{lin} is uniformly $h$--noncritical on $J$, then it is also uniformly
noncritical on $(a_{0},e_{*}]$ and $[e_{*},+\infty)$. In addition, it is useful to note that \eqref{NCP} also can be written as follows;
\begin{displaymath}
     |x(t)|\leq \theta\sup\limits_{|u\ast t^{\ast-1}|_{\ast}\leq T}|x(u)| =  \theta\sup\{|x(u)|\colon \frac{1}{h(T)}<\frac{h(u)}{h(t)}\leq h(T) \}
\end{displaymath}
for all $t$ such that $[t\ast T^{\ast-1}, t\ast T]\subset \mathcal{I}$.
\end{remark}

\begin{definition}
\label{NCUTH}
    The system \eqref{lin} is $h$--expansive on an interval $\mathcal{I}\subseteq J$ if there exists  positive constants $L$ and $\beta$ such that if $t\mapsto x(t)$ is any solution of \eqref{lin} and $[a,b]\subset \mathcal{I}$, then for $a\leq t\leq b$:
    \begin{equation}\label{h-exp}
        |x(t)|\leq L\{h(t\ast a^{\ast-1})^{-\beta}|x(a)| + h(b\ast t^{\ast-1})^{-\beta}|x(b)|\}.
    \end{equation}
\end{definition}

\begin{remark}
\label{ACHE}
By using the alternative characterization of the uniform $h$--dichotomy provided by \eqref{alt-hd2} we also have an additional formulation for
\eqref{h-exp} as follows
\begin{displaymath}
        |x(t)|\leq L\{h(d(t,a))^{-\beta}|x(a)| + h(d(b,t))^{-\beta}|x(b)|\}
\end{displaymath}
\end{remark}

The thought--provoking article of K.J. Palmer \cite{Palmer} provides a characterization of the uniform $h$--dichotomies in terms of properties of uniform $h$--noncriticality and $h$--expansiveness restricted to the specific growth rate $h(t)=e^{t}$. A first attempt to generalize the Palmer's results has been carried out in \cite{E-P-R}. In fact it was proved that 
\begin{proposition} \cite[Th.1]{E-P-R}
\label{P1}
If the linear system \eqref{lin} has the properties of uniform bounded $h$--growth and uniform $h$--noncriticality on
$J^{+}:=[e_{\ast},+\infty)$ then it has a uniform $h$--dichotomy on $[e_{\ast},+\infty)$. 
\end{proposition}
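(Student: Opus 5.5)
The plan follows Palmer's strategy for the exponential case, rewritten in the group $(J,\ast)$ so that the uniform partition by intervals of $h$--length $T$ plays the role of unit time steps. Throughout, $T>e_{\ast}$ and $\theta\in(0,1)$ are the constants of Definition \ref{NCUTR} on $J^{+}$. By Proposition~a) we may assume \eqref{equivalencia} with constants $K_0,\beta$ on $J^{+}$, and we write $C_T:=K_0h(T)^{\beta}$ for the resulting bound on the growth over intervals of $h$--length at most $T$.

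\textbf{Step 1 (discrete dichotomy of norms).} Set $t_k:=T^{\ast k}$ for $k\geq 1$, so that $h(t_k)=h(T)^k$. For a nontrivial solution $x$, let $m_k:=\sup\{|x(u)|: t_{k-1}\leq u\leq t_{k}\}$. Noncriticality at a point $t\in[t_{k-1},t_k]$ realizing the maximum gives $m_k\leq \theta\max\{m_{k-1},m_k,m_{k+1}\}$. Since $\theta<1$, this forces $m_k\leq\theta\max\{m_{k-1},m_{k+1}\}$. From this discrete inequality I would derive Palmer's dichotomy for the sequence $(m_k)$: either it decays geometrically,
\[
m_{k}\leq \theta^{k-j}m_j \quad (k\geq j),
\]
or from some index on it grows geometrically, $m_k\geq\theta^{-(k-j)}m_j$. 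Bounded growth lets us pass between $\sup$ and pointwise values at the cost of $C_T$.

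\textbf{Step 2 (subspaces).} Define $V:=\{\xi:\ \sup_{t\geq e_\ast}|\Phi(t,e_\ast)\xi|<\infty\}$. By Step~1, bounded solutions decay like $\theta^{k}$. Translating $\theta^{k}=h(T)^{-\alpha k}$ with $\alpha:=-\ln\theta/\ln h(T)$ yields $|x(t)|\leq K(h(t)/h(s))^{-\alpha}|x(s)|$ for $t\geq s$. Pick a complement $W$ of $V$ and let $P$ be the projection onto $V$ along $W$. Solutions starting in $W\setminus\{0\}$ are unbounded, hence grow geometrically from some index on.

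\textbf{Step 3 (uniform constants; the main obstacle).} The hard part is to make these estimates uniform in $s$, and to bound $\|\Phi(t)P\Phi^{-1}(s)\|$ rather than individual solutions. For the unstable part I would show, using noncriticality once more, that if a solution satisfies $m_{j+1}\geq m_j$ then it increases geometrically for all $k\geq j$. The first index of increase for a solution starting in $W$ then turns out to be bounded uniformly, by a compactness argument on the unit sphere of the finite-dimensional space $W$ together with bounded growth. This gives the unstable estimate of \eqref{eq:2.2}.

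The remaining issue is the angle between $\Phi(t)V$ and $\Phi(t)W$, i.e.\ a bound on $\|\Phi(t)P\Phi^{-1}(t)\|$ uniform in $t$. Here I would follow Palmer/Coppel and choose $W$ cleverly, possibly replacing it by the subspace of solutions with minimal growth on an initial segment, and use $|x(t)|\leq |x^{+}(t)|+|x^{-}(t)|$ together with the two geometric rates to bound the projector norm. Finally, Proposition \ref{ProyCte} with $\mathcal{I}=J^{+}$ concludes.
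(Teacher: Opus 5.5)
The paper does not prove this statement; it quotes it from \cite[Th.1]{E-P-R}. So I judge your outline on its own merits. Steps 1--2 and the compactness argument in Step 3 are sound:
for $k\geq 2$ and $t\in[t_{k-1},t_k]$ the window $(t\ast T^{\ast-1},t\ast T]$ lies in $[t_{k-2},t_{k+1}]$, so $m_k\leq\theta\max\{m_{k-1},m_{k+1}\}$;
once $m_{j+1}\geq m_j$, growth by the factor $\theta^{-1}$ propagates;
and the sets $\{\xi\in W:\ |\xi|=1,\ m_{j+1}(\xi)>m_j(\xi)\}$ are open, increasing in $j$, and cover the unit sphere of $W$.
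One detail needs fixing. Bounded $h$--growth only bounds later values by earlier ones, so you cannot bound $m_j$ by $|x(s)|$ for $s$ inside the $j$--th block. Compare instead with the sup over the next block, which lies in $[s,s\ast T^{\ast 2}]$; for growing solutions, use $m_{k+1}\leq C|x(t)|$ for $t$ in the $k$--th block.

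The genuine gap is the last paragraph of Step 3, which is precisely the step that turns estimates on individual solutions into the operator estimates \eqref{eq:2.2}. In particular, the compactness argument does not yet give the unstable half of \eqref{eq:2.2}. The reason is that $\Phi(t)(I-P)\Phi^{-1}(s)\eta$ starts from $\Phi(s)(I-P)\Phi^{-1}(s)\eta$, whose size you have not controlled. What you sketch would not close this. Bounding the projector means proving $|x^{+}(t)|\leq M|x^{+}(t)+x^{-}(t)|$, which is a \emph{lower} bound on $|x(t)|$, and the inequality $|x(t)|\leq|x^{+}(t)|+|x^{-}(t)|$ goes the wrong way. No clever choice of $W$ is needed either: every complement of $V$ works.

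The missing idea is to use bounded $h$--growth forward over a fixed $h$--length. Let $K,\alpha$ be the constants of your individual estimates (decay on $\Phi(\cdot,e_\ast)V$, growth on $\Phi(\cdot,e_\ast)W$). Fix $N$ with $K^{-1}h(T)^{\alpha N}-Kh(T)^{-\alpha N}\geq 1$ and put $\tau=T^{\ast N}$, so that $h(s\ast\tau)/h(s)=h(T)^{N}$ for every $s$. Given $s\geq e_\ast$ and unit vectors $u\in\Phi(s,e_\ast)V$, $v\in\Phi(s,e_\ast)W$, the solution $z(t)=\Phi(t,s)(u+v)$ satisfies
\[
1\leq K^{-1}h(T)^{\alpha N}-Kh(T)^{-\alpha N}\leq|z(s\ast\tau)|\leq C_{\tau}|u+v|,
\]
where the last inequality is bounded $h$--growth on $[s,s\ast\tau]$. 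Thus $|u+v|\geq C_\tau^{-1}$ uniformly in $s$. Now take arbitrary nonzero $a\in\Phi(s,e_\ast)V$ and $b\in\Phi(s,e_\ast)W$. The identity $a+b=|a|\big(a/|a|+b/|b|\big)+(|b|-|a|)\,b/|b|$, together with $|a+b|\geq\big|\,|a|-|b|\,\big|$, gives $|a|\leq 2C_\tau|a+b|$. Hence, for all $s\geq e_\ast$,
\[
\|\Phi(s,e_\ast)P\Phi(e_\ast,s)\|\leq 2C_\tau \quad\text{and}\quad \|\Phi(s,e_\ast)(I-P)\Phi(e_\ast,s)\|\leq 1+2C_\tau .
\]
Composing these bounds with your individual estimates yields \eqref{eq:2.2} on $J^{+}$. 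Without this step the proposal stops short of a proof.
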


The following result can be proved \textit{mutatis mutandi} from the proof of the previous result.

\begin{proposition} 
\label{P1.1}
If the linear system \eqref{lin} has the properties of uniform bounded $h$--decay and uniform $h$--noncriticality on
$J^{-}:=(a_{0},e_{\ast}]$ then it has a uniform $h$--dichotomy on $(a_{0},e_{\ast}]$. 
\end{proposition}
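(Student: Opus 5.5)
The natural route is to reduce Proposition \ref{P1.1} to Proposition \ref{P1} by a change of time that reverses orientation, so that $J^{-}=(a_{0},e_{\ast}]$ becomes a half-line $[e_{\ast},+\infty)$ for a new growth rate. Concretely, define $\tilde h(t):=1/h(t^{\ast-1})$, or work directly with the reflection $\sigma(t)=t^{\ast-1}=h^{-1}(1/h(t))$, which maps $(a_{0},e_{\ast}]$ onto $[e_{\ast},+\infty)$ and reverses order. Set $y(\tau):=x(\sigma^{-1}(\tau))$. The system $x'=A(t)x$ is not in general transported to a system of the form \eqref{lin} unless $h$ is differentiable. For this reason I would avoid the ODE reformulation and instead note that the proof of Proposition \ref{P1} in \cite{E-P-R} only uses three ingredients: the evolution family $\Phi(t,s)$, the group structure, and the inequalities \eqref{equivalencia}, \eqref{NCP}. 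So I plan to redo that argument with time reversed, that is, \emph{mutatis mutandis}.

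Under the reflection, the key translations are as follows.
\begin{itemize}
\item Bounded $h$--decay \eqref{equivalenciaBD} on $J^{-}$, i.e. $\|\Phi(t,s)\|\le K_{0}(h(s)/h(t))^{\beta}$ for $t\le s\le e_{\ast}$, becomes bounded growth for the backward evolution $\Psi(\tau,\sigma):=\Phi(\tau^{\ast-1},\sigma^{\ast-1})$ on $[e_{\ast},+\infty)$. This uses $h(\tau^{\ast-1})=1/h(\tau)$, so that $h(s)/h(t)=h(\tau)/h(\sigma)$.
\item Noncriticality \eqref{NCP} is symmetric under $u\mapsto u^{\ast-1}$, because $|u\ast t^{\ast-1}|_{\ast}=|u^{\ast-1}\ast t|_{\ast}$. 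It therefore transfers verbatim, with the admissible $t\in(a_{0}\ast T,T^{\ast-1}]$ mapped to $\tau\ge T$.
\end{itemize}

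Then the steps of \cite[Th.1]{E-P-R} apply to $\Psi$ on $[e_{\ast},+\infty)$. The first step, via the uniform partition $I_{k}$ with the bounded growth constant, shows that solutions that are bounded as $\tau\to\infty$ decay like $\theta^{k}$ on successive blocks of length $T$. By \eqref{LP} this gives a bound $h(\tau\ast\sigma^{\ast-1})^{-\alpha}$ with $\alpha=-\ln\theta/\ln h(T)$. The second step shows that a complementary subspace grows at the same rate. The third step builds a projector $\tilde P$ and bounds its norm uniformly, giving the two estimates of \eqref{eq:2.2}.

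Translating back interchanges the roles of the stable and unstable parts. The subspace of $\Psi$ decaying as $\tau\to\infty$ is the set of solutions decaying as $t\to a_{0}^{+}$, which by Lemma \ref{SPLIT}(b) is the $Q$-part. So I take $P:=I-\tilde P$, and the inequalities $t\ge s$ and $t\le s$ swap consistently with \eqref{HD}. By Proposition \ref{ProyCte}, this yields the uniform $h$--dichotomy on $(a_{0},e_{\ast}]$.

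The main obstacle is the third step. The argument in \cite{E-P-R} fixes the stable subspace at $e_{\ast}$, the left endpoint, and chooses the unstable complement there. In the reversed setting, $e_{\ast}$ is the right endpoint of $J^{-}$, while the interval is open at $a_{0}$. I would first check that the bounded-norm estimate for the projector uses only the compactness-free, block-by-block noncriticality, and that no limit at the endpoint is needed. If an issue arises, I would replace limits at $a_{0}$ by the finite-interval version on $[a,e_{\ast}]$ and let $a\to a_{0}$, using uniformity of constants.
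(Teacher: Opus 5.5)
Your proposal is correct and follows essentially the paper's route. The paper only asserts that Proposition~\ref{P1.1} follows \emph{mutatis mutandis} from Proposition~\ref{P1}, and your reflection $t\mapsto t^{\ast-1}$ makes that precise: it turns bounded $h$--decay and noncriticality on $(a_{0},e_{\ast}]$ into bounded $h$--growth and noncriticality for the evolution family $\Psi(\tau,\sigma)=\Phi(\tau^{\ast-1},\sigma^{\ast-1})$ on $[e_{\ast},+\infty)$, and you then take $P(t)=I-\widetilde{P}(t^{\ast-1})$. The obstacle you anticipate in the third step is not real: after reflection $e_{\ast}$ is the left endpoint of $[e_{\ast},+\infty)$, the argument of Proposition~\ref{P1} applies verbatim to $\Psi$, and the only limits involved are $\tau\to+\infty$, so no approximation by intervals $[a,e_{\ast}]$ is needed.
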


A direct consequence of Proposition \ref{P1} was the following equivalence:
\begin{proposition}\cite[Th.2]{E-P-R}
\label{P2}
    Assume that the linear system \eqref{lin} has a uniform bounded $h$--growth on $[e_{\ast},+\infty)$. Then the following three statements are equivalent:
    \begin{enumerate}
        \item[(i)] The system \eqref{lin} has a uniform $h$--dichotomy on $[e_{\ast},+\infty)$.
        \item[(ii)] The system \eqref{lin} is $h$--expansive on $[e_{\ast},+\infty)$.
        \item[(iii)] The system \eqref{lin} is uniformly $h$--noncritical on $[e_{\ast},+\infty)$.
    \end{enumerate}

    Moreover, without the assumption of uniform bounded $h$--growth, it is still true that the following implications are verified: $\textnormal{(i)}\Rightarrow\textnormal{(ii)}\Rightarrow\textnormal{(iii)}$.
\end{proposition}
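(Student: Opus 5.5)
The statement is cited from \cite[Th.2]{E-P-R}, so the plan is to prove the cycle (i)$\Rightarrow$(ii)$\Rightarrow$(iii) without bounded growth, and then close it with (iii)$\Rightarrow$(i) using Proposition \ref{P1}.

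\textbf{(iii)$\Rightarrow$(i).} This is exactly Proposition \ref{P1}: uniform bounded $h$--growth together with uniform $h$--noncriticality on $[e_{\ast},+\infty)$ gives the dichotomy.

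\textbf{(i)$\Rightarrow$(ii).} Let $x$ be a solution and fix $a\leq t\leq b$ in $[e_{\ast},+\infty)$. Split
\[
x(t)=\Phi(t,a)P(a)x(a)+\Phi(t,b)Q(b)x(b).
\]
This identity uses the invariance \eqref{Invariance}: we have $\Phi(t,a)P(a)x(a)=P(t)x(t)$ and, similarly, $\Phi(t,b)Q(b)x(b)=Q(t)x(t)$. Estimating each term by \eqref{alt-hd}, with $t\geq a$ for the first and $b\geq t$ for the second, gives
\[
|x(t)|\leq K\,h(t\ast a^{\ast-1})^{-\alpha}\|P(a)\|\,|x(a)|+K\,h(b\ast t^{\ast-1})^{-\alpha}\|Q(b)\|\,|x(b)|.
\]
Next we bound $\|P(s)\|$ and $\|Q(s)\|$ uniformly. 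Taking $t=s$ in \eqref{HD} gives $\|P(s)\|\leq K$ and $\|Q(s)\|\leq K$. Hence \eqref{h-exp} holds with $L=K^{2}$ and $\beta=\alpha$.

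\textbf{(ii)$\Rightarrow$(iii).} Choose $T>e_{\ast}$ so large that $2L\,h(T)^{-\beta}=:\theta<1$; this is possible because $h(T)\to\infty$. For a given $t\geq T$, apply \eqref{h-exp} on $[a,b]=[t\ast T^{\ast-1},t\ast T]$. This interval lies in $[e_{\ast},+\infty)$ because $t\geq T$. By \eqref{group2} we have $h(t\ast a^{\ast-1})=h(T)$ and $h(b\ast t^{\ast-1})=h(T)$. Therefore
\[
|x(t)|\leq L\,h(T)^{-\beta}\bigl(|x(a)|+|x(b)|\bigr)\leq\theta\sup_{|u\ast t^{\ast-1}|_{\ast}\leq T}|x(u)|,
\]
which is \eqref{NCP}.

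\textbf{Main obstacle.} In \eqref{NCP} the supremum is over the half-open set $t\ast T^{\ast-1}<u\leq t\ast T$, so the left endpoint $a$ itself is not included. We handle this by continuity of $x$: $|x(a)|$ is a limit of values $|x(u)|$ with $u$ in that set, so $|x(a)|$ is still dominated by the supremum. Alternatively, one can apply \eqref{h-exp} with $a$ replaced by a point slightly to the right and pass to the limit.

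Beyond that, the only delicate point is bookkeeping of the group operations. The rest is direct, and none of these three steps uses bounded growth, which gives the "moreover" claim.
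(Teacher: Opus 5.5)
Your proof is correct and follows essentially the same route as the paper. The paper only cites \cite[Th.2]{E-P-R} for this statement, but its proof of Theorem~\ref{PJ} reproduces the same argument: the splitting $x(t)=\Phi(t,a)P(a)x(a)+\Phi(t,b)Q(b)x(b)$ from Case~2 of Lemma~\ref{Lem-hexpanvar}, the choice $2Lh(T)^{-\beta}<1$ on $[t\ast T^{\ast-1},t\ast T]$, and Proposition~\ref{P1} for (iii)$\Rightarrow$(i).

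One small tidy-up: the factors $\|P(a)\|$ and $\|Q(b)\|$ are superfluous, because \eqref{HD} already bounds $\|\Phi(t,a)P(a)\|$ and $\|\Phi(t,b)Q(b)\|$, so you get $L=K$ directly as in \eqref{EE-MA}. Your continuity argument for the half-open supremum is correct, and is slightly more careful than the paper on that point.
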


\section{$h$--dichotomies on $J$}

The goal of this section is double, the first goal is to generalize the Proposition \ref{P2} for the case $\mathcal{I}=J$
whereas the second one is to provide necessary and sufficient conditions ensuring that if the property of uniform $h$--dichotomy is verified simultaneously on $J^{-}=(a_{0},e_{*}]$ and
$J^{+}=[e_{*},+\infty)$ then it is also verified on $J$. Both objectives will be achieved through several intermediate results.

With respect to the second goal, it is important to emphasize that if (\ref{lin}) has
a uniform $h$--dichotomy on the intervals $(a_{0},e_{*}]$ and $[e_{*},+\infty)$ this 
not implies that (\ref{lin}) has a uniform $h$--dichotomy on $J=(a_{0},e_{*}]\cup [e_{*},+\infty)$. In order to illustrate the above situation, let us consider the differential equation
\begin{equation}
\label{example}
x'=a(t)x  \quad \textnormal{for any $t\in J:=(0,+\infty)$},
\end{equation}
where $a\colon (0,+\infty)\to \mathbb{R}$ is defined as follows:
\begin{displaymath}
a(t)=\left\{\begin{array}{rcl}
\frac{1}{t}  &\textnormal{if} & 0<t\leq 1-\ell, \\\\
\phi(t)      &\textnormal{if} & 1-\ell< t < 1+\ell, \\\\
-\frac{1}{t} &\textnormal{if} & 1+\ell \leq t,
\end{array}\right.
\end{displaymath}
with $\ell\in (0,1)$ and $\phi \colon [1-\ell,1+\ell]\to \mathbb{R}$ is a continuous function such that
\begin{displaymath}
\phi(1-\ell)=\frac{1}{1-\ell} \quad \textnormal{and} \quad \phi(1+\ell)=-\frac{1}{1+\ell}. 
\end{displaymath}

It is straightforward to verify that
$$
\Phi(t)=\frac{1}{t} \quad \textnormal{and} \quad \Phi(t,s)=\left(\frac{t}{s}\right)^{-1}  \quad\textnormal{for any $t\geq s>1+\ell$},
$$
then, it is easy to deduce that the equation (\ref{example}) has an $h$--dichotomy on 
$\mathcal{I}_{1}=(1+\ell,+\infty)$ with $P=K=\alpha=1$ and the growth rate $h(t)=t$ defined on $J=(0,+\infty)$.

Similarly, we can verify that
$$
\Phi(t)=t \quad \textnormal{and} \quad \Phi(t,s)=\left(\frac{s}{t}\right)^{-1}=\frac{t}{s}  \quad\textnormal{for any $0<t\leq s<1-\ell$},
$$
and, as before, we can deduce that the equation (\ref{example}) has an $h$--dichotomy on 
$\mathcal{I}_{2}=(0,1-\ell)$ with $P=0$, $K=\alpha=1$ and the same growth rate $h(t)=t$.

As $h\colon (0,+\infty)\to (0,+\infty)$ is a growth rate, we can see that the unit element
of the corresponding group $(J,*)$ is $e_{*}=1$. Moreover, as $\ell \in (0,1)$ it follows that
 $\mathcal{I}_{1}\subset [e_{\ast},+\infty)$  and $\mathcal{I}_{2}\subset (0,e_{\ast}]$. By using \cite[Lemma 4.2]{E-P-R} we can see that (\ref{example})
 has an $h$ dichotomy on $(0,e_{\ast}]$ and $[e_{\ast},+\infty)$. Nevertheless, by using Lemma \ref{FDIA} we can deduce that
 the equation (\ref{example}) has not an $h$--dichotomy on $J=(0,+\infty)$ since any nontrivial solution is bounded
 on $J=(0,+\infty)$. In fact, notice that
 for any $x_{0}\neq 0$ we have that
$$
 t\mapsto x(t,t_{0},x_{0})=e^{\int_{t_{0}}^{t}a(s)\,ds}x_{0}
$$
is the solution of (\ref{example}) passing through $x_{0}$ at $t=t_{0}$. It is straightforward
to check that the boundedness follows since $\lim\limits_{t\to +\infty}x(t,t_{0},x_{0})=0$ and $\lim\limits_{t\to 0^{+}}x(t,t_{0},x_{0})=0$.

\subsection{Preliminary results}

The following results provide an alternative definition for a uniform $h$--dichotomy
on $J$ in terms of the properties of uniform $h$--dichotomies
on $(a,e_{*}]$ and $[e_{*},+\infty)$.

\begin{lemma}
\label{ISF}
If the linear system \eqref{lin} has a uniform $h$--dichotomy on $[e_{\ast},+\infty)$ with projectors $P^{+}(s)$ and constants
$(K,\alpha)$, then
\begin{equation}
\label{FW}
\mathcal{R}P^{+}(s)=\left\{\xi\in\mathbb{R}^n \colon \sup\limits_{t\geq s\geq e_{\ast}}|\Phi(t,s)\xi|<+\infty\right\}.
\end{equation}
\end{lemma}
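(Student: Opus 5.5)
The identity \eqref{FW} will be proved by double inclusion. The set on the right is read, for a fixed $s\geq e_{\ast}$, as the set of $\xi$ with $\sup_{t\geq s}|\Phi(t,s)\xi|<+\infty$, that is, the initial data at time $s$ of solutions bounded forward on $[s,+\infty)$.

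\emph{First inclusion.} Let $\xi\in\mathcal{R}P^{+}(s)$, so that $\xi=P^{+}(s)\xi$. By the first inequality of \eqref{HD} on $[e_{\ast},+\infty)$, for all $t\geq s$ we get
\[
|\Phi(t,s)\xi|=|\Phi(t,s)P^{+}(s)\xi|\leq K\Bigl(\frac{h(t)}{h(s)}\Bigr)^{-\alpha}|\xi|\leq K|\xi|,
\]
because $h$ is increasing. Hence the supremum is finite and $\xi$ belongs to the right-hand side.

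\emph{Second inclusion.} Suppose $\sup_{t\geq s}|\Phi(t,s)\xi|=M<+\infty$. Write $\xi=P^{+}(s)\xi+Q^{+}(s)\xi$ and $x(t)=\Phi(t,s)\xi=x^{+}(t)+x^{-}(t)$ as in \eqref{splitting}. From the first part, $|x^{+}(t)|\leq K|\xi|$, so $|x^{-}(t)|\leq M+K|\xi|$ for all $t\geq s$. On the other hand, Lemma \ref{SPLIT}(a) applied on $\mathcal{I}=[e_{\ast},+\infty)$ gives
\[
\frac{|Q^{+}(s)\xi|}{K}\Bigl(\frac{h(t)}{h(s)}\Bigr)^{\alpha}\leq |x^{-}(t)| \leq M+K|\xi| \quad \textnormal{for all } t\geq s.
\]
Since $h$ is a homeomorphism onto $(0,+\infty)$, we have $h(t)\to+\infty$ as $t\to+\infty$. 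Therefore $Q^{+}(s)\xi=0$, so $\xi=P^{+}(s)\xi\in\mathcal{R}P^{+}(s)$.

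The only step needing care is this reverse bound on $|x^{-}|$. It rests on the invariance \eqref{Invariance} and the second inequality of \eqref{HD}, and both are already packaged in Lemma \ref{SPLIT}. The other point to check is the unboundedness of $h$, which follows directly from Definition \ref{GR}.
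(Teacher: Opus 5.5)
Your proof is correct and follows essentially the same route as the paper: the first inclusion is identical, and for the second the paper applies the identity $I-P^{+}(s)=\Phi(s,t)[I-P^{+}(t)]\Phi(t,s)$ to the whole solution to get $|[I-P^{+}(s)]\xi|\leq K\bigl(h(t)/h(s)\bigr)^{-\alpha}|\Phi(t,s)\xi|$ and then lets $t\to+\infty$, which is the same estimate you invoke through Lemma \ref{SPLIT}(a). The only difference is that you apply that estimate to $x^{-}$, so you need the extra triangle-inequality bound $|x^{-}(t)|\leq M+K|\xi|$; the paper avoids this step by using the identity directly on $\Phi(t,s)\xi$.
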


\begin{proof}
Let $\xi\in\mathcal{R}P^{+}(s)$ with $s\geq e_{\ast}$. As the system (\ref{lin}) has a uniform $h$--dichotomy on $[e_{*},+\infty)$, for any $t\geq s\geq e_{\ast}$ we have that 
\begin{align*}
    |\Phi(t,s)\xi| &= |\Phi(t,s)P^{+}(s)\xi|\leq K\left(\frac{h(t)}{h(s)}\right)^{-\alpha}|\xi|,
\end{align*}
and, by using $h(t)\geq h(s)$ we easily deduce that 
\begin{equation}
\label{cota}
\sup\limits_{t\geq s\geq e_{\ast}}|\Phi(t,s)\xi|<+\infty.
\end{equation}

\smallskip
Now, let us consider $\xi\in \mathbb{R}^{n}$ such that (\ref{cota}) is fulfilled. By recalling that (\ref{lin}) has a uniform $h$--dichotomy on $[e_{*},+\infty)$ and using the identity:
$$
I-P^{+}(s)=\Phi(s,t)\Phi(t,s)[I-P^{+}(s)]=\Phi(s,t)[I-P^{+}(t)]\Phi(t,s),
$$
combined with $t\geq s>e_{\ast}$, we can deduce that
\begin{displaymath}
|[I-P^{+}(s)]\xi| \leq  K\left(\frac{h(t)}{h(s)}\right)^{-\alpha}|\Phi(t,s)\xi|.  
\end{displaymath}

By letting $t\to +\infty$ combined with (\ref{cota}) we can deduce that $P^{+}(s)\xi=\xi$,
which implies that $\xi \in \mathcal{R}P^{+}(s)$.
\end{proof}

\begin{lemma}
\label{ISB}
If the system \eqref{lin} has a uniform $h$--dichotomy on $(a_{0},e_{\ast}]$ with projectors $P^{-}(s)$ and constants $(K,\alpha)$, then
\begin{equation}
\label{BW}
\mathcal{N}P^{-}(s)=\left\{\xi\in\mathbb{R}^n \colon \sup\limits_{a_{0}<t\leq s\leq e_{\ast}}|\Phi(t,s)\xi|<+\infty\right\}.
\end{equation}
\end{lemma}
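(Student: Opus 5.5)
The plan is to mirror the proof of Lemma \ref{ISF}, reversing the direction of time and exchanging the roles of $P^{-}(s)$ and $Q^{-}(s):=I-P^{-}(s)$. Throughout, fix $s\in(a_{0},e_{\ast}]$ and read the supremum in \eqref{BW} as taken over $t\in(a_{0},s]$ for this fixed $s$. We then prove the two inclusions separately.

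\emph{First inclusion.} Let $\xi\in\mathcal{N}P^{-}(s)$, so that $\xi=Q^{-}(s)\xi$. For $a_{0}<t\leq s\leq e_{\ast}$, the second inequality of \eqref{HD} on $(a_{0},e_{\ast}]$ gives
\begin{displaymath}
|\Phi(t,s)\xi|=|\Phi(t,s)Q^{-}(s)\xi|\leq K\left(\frac{h(s)}{h(t)}\right)^{-\alpha}|\xi|.
\end{displaymath}
Since $h$ is increasing, $h(s)\geq h(t)$, so the right-hand side is at most $K|\xi|$. Hence the supremum is finite.

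\emph{Second inclusion.} Let $\xi$ satisfy $\sup_{a_{0}<t\leq s}|\Phi(t,s)\xi|=:M<+\infty$. We need an identity that expresses $P^{-}(s)\xi$ through $\Phi(t,s)\xi$ with $t\leq s$. Using $P^{-}(s)^{2}=P^{-}(s)$ and the invariance \eqref{Invariance}, we have
\begin{displaymath}
P^{-}(s)=\Phi(s,t)\Phi(t,s)P^{-}(s)=\Phi(s,t)P^{-}(t)\Phi(t,s).
\end{displaymath}
Here $s\geq t$, so the first inequality of \eqref{HD}, applied with the pair $(s,t)$ in place of $(t,s)$, gives
\begin{displaymath}
|P^{-}(s)\xi|\leq K\left(\frac{h(s)}{h(t)}\right)^{-\alpha}|\Phi(t,s)\xi|\leq K\left(\frac{h(s)}{h(t)}\right)^{-\alpha}M.
\end{displaymath}
This is the same computation as the left inequality of \eqref{ae2} in Lemma \ref{SPLIT}.

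Finally, let $t\to a_{0}^{+}$. Because $h\colon J\to(0,+\infty)$ is an increasing homeomorphism, $h(t)\to0$, so $h(s)/h(t)\to+\infty$ and the bound tends to $0$. Therefore $P^{-}(s)\xi=0$, that is, $\xi\in\mathcal{N}P^{-}(s)$.

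The only delicate point is this last step. It needs $h(t)\to0$ as $t\to a_{0}^{+}$, which is exactly the surjectivity of $h$ onto $(0,+\infty)$ in Definition \ref{GR}, and it needs the dichotomy estimates to hold on the whole interval $(a_{0},s]$, which they do because that interval lies in $(a_{0},e_{\ast}]$.
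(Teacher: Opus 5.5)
Your proof is correct and follows the paper's argument essentially line by line. For the forward inclusion you use the second dichotomy estimate on $\mathcal{N}P^{-}(s)$ together with $h(t)\leq h(s)$; for the reverse inclusion you use the identity $P^{-}(s)=\Phi(s,t)P^{-}(t)\Phi(t,s)$, the first dichotomy estimate applied to the pair $(s,t)$, and $h(t)\to 0$ as $t\to a_{0}^{+}$. One small remark: that identity needs only $\Phi(s,t)\Phi(t,s)=I$ and the invariance property, not the idempotency $P^{-}(s)^{2}=P^{-}(s)$.
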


\begin{proof}
 Let and $\xi\in\mathcal{N}P^{-}(s)$ with $s\in (a_0, e_{\ast}]$. For any $a_0<t\leq s\leq e_{\ast}$ the uniform $h$--dichotomy leads to
$$|\Phi(t,s)\xi| = |\Phi(t,s)[I-P^{-}(s)]\xi|\leq K\left(\frac{h(s)}{h(t)}\right)^{-\alpha}|\xi|=K\left(\frac{h(t)}{h(s)}\right)^{\alpha}|\xi|<K|\xi|$$
since $h(t)\leq h(s)$ and we conclude that 
\begin{equation}
\label{cota2}
\sup\limits_{a_0<t\leq s\leq e_{\ast}}|\Phi(t,s)\xi|<+\infty.
\end{equation}

\smallskip
Now, let us consider $\xi\in \mathbb{R}^{n}$ such that (\ref{cota2}) is fulfilled. By using the identity
\begin{displaymath}
P^{-}(s)=P^{-}(s)\Phi(s,t)\Phi(t,s)=\Phi(s,t)P^{-}(t)\Phi(t,s),    
\end{displaymath}
combined with $a_{0}<t\leq s\leq e_{\ast}$ and the uniform $h$--dichotomy, we can deduce that
$$
|P^{-}(s)\xi|\leq K\left(\frac{h(s)}{h(t)}\right)^{-\alpha}|\Phi(t,s)\xi|.
$$

By letting $t\to a_{0}^{+}$ combined with (\ref{cota2}) and noticing that $h(t)\to 0$,
we can deduce that $P^{+}(s)\xi=0$,
which implies that $\xi \in \mathcal{N}P^{+}(s)$.
\end{proof}

\begin{remark}
\label{EXTPRO}
Notice that if the equation \eqref{lin} has a uniform $h$--dichotomy on $J^{+}=[e_{*},+\infty)$ with projector $P^{+}\colon J^{+}\to M_{n}(\mathbb{R})$ this projector can be extended to $J$ as follows:
\begin{equation}
\label{EP+}
\widetilde{P}^{+}(t)=\left\{\begin{array}{lcl}
P^{+}(t) & \textnormal{if} &  t\in (e_{*},+\infty), \\\\
\Phi(t,e_{*})P^{+}(e_{*})\Phi(e_{*},t) & \textnormal{if} & t\in (a_{0},e_{*}].
\end{array}\right.
\end{equation}

Furthermore, it is straightforward to prove that $\widetilde{P}^{+}(t)\widetilde{P}^{+}(t)=\widetilde{P}^{+}(t)$ and
$$
\widetilde{P}^{+}(t)\Phi(t,s)=\Phi(t,s)\widetilde{P}^{+}(s)  \quad \textnormal{for any $t,s\in J$}.
$$

Moreover, when \eqref{lin} has a uniform $h$--dichotomy on $J^{-}=(a_{0},e_{*}]$ with projector $P^{-}\colon J^{-}\to M_{n}(\mathbb{R})$ we can extend $P^{-}(\cdot)$ in a similar way.
\end{remark}

From now on, by an abuse of notation, the extensions of $P^{+}(\cdot)$ and $P^{-}(\cdot)$ to $J$ will be denoted
as $P^{+}(\cdot)$ and $P^{-}(\cdot)$ instead of  $\widetilde{P}^{+}(\cdot)$ and $\widetilde{P}^{-}(\cdot)$. The reader will 
not be disturbed by this fact.

\begin{lemma}
\label{LemPQvar}
Assume that the system \eqref{lin} has uniform $h$--dichotomies on both intervals $[e_{\ast},+\infty)$ and $(a_{0},e_{\ast}]$
with projectors $P^{+}(\cdot)$ and $P^{-}(\cdot)$ respectively which are defined on $J$ in the sense of Remark \ref{EXTPRO}. Then \eqref{lin} has no nontrivial bounded solution if and only if  
the projectors fulfill the condition:
\begin{equation}
\label{IdePr}
P^{+}(s)P^{-}(s)=P^{-}(s)P^{+}(s)=P^{+}(s) \quad \textnormal{ for any } s\in J.
\end{equation}
\end{lemma}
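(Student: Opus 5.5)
The plan is to work at the single time $s=e_{\ast}$ and then transport to any $s\in J$ by invariance. By Remark \ref{EXTPRO}, both extended projectors commute with $\Phi(t,s)$ for all $t,s\in J$. Hence the identities \eqref{IdePr} at an arbitrary $s$ follow from the same identities at $s=e_{\ast}$ by conjugating with $\Phi(s,e_{\ast})$. For instance,
$P^{+}(s)P^{-}(s)=\Phi(s,e_{\ast})P^{+}(e_{\ast})P^{-}(e_{\ast})\Phi(e_{\ast},s)$.
So it suffices to prove the equivalence at $e_{\ast}$.

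The key linear-algebra observation concerns two projectors $P^{+}=P^{+}(e_{\ast})$ and $P^{-}=P^{-}(e_{\ast})$. The condition $P^{-}P^{+}=P^{+}$ means exactly $\mathcal{R}P^{+}\subseteq\mathcal{R}P^{-}$. The condition $P^{+}P^{-}=P^{+}$ means exactly $\mathcal{N}P^{-}\subseteq\mathcal{N}P^{+}$. Next I relate bounded solutions to these subspaces via Lemmas \ref{ISF} and \ref{ISB}. A solution $x(t)=\Phi(t,e_{\ast})\xi$ is bounded on $[e_{\ast},+\infty)$ iff $\xi\in\mathcal{R}P^{+}$. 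It is bounded on $(a_{0},e_{\ast}]$ iff $\xi\in\mathcal{N}P^{-}$. Boundedness on compact pieces is automatic by continuity, which takes care of the suprema over $s$ in those lemmas when taking $s=e_{\ast}$. Thus the space of initial data at $e_{\ast}$ of bounded solutions on $J$ is $\mathcal{R}P^{+}\cap\mathcal{N}P^{-}$. It remains to show that $\mathcal{R}P^{+}\cap\mathcal{N}P^{-}=\{0\}$ iff $P^{\pm}$ satisfy \eqref{IdePr} at $e_{\ast}$.

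For the sufficiency direction, assume \eqref{IdePr} and take $\xi\in\mathcal{R}P^{+}\cap\mathcal{N}P^{-}$. Then $\xi=P^{+}\xi=P^{+}P^{-}\xi=0$, so there is no nontrivial bounded solution. For the necessity direction, I need more than trivial intersection, since trivial intersection of a range and a kernel does not by itself give the inclusions. Here I would exploit the dichotomy structure further. Consider $\xi\in\mathcal{R}P^{+}$ and split it as $\xi=P^{-}\xi+(I-P^{-})\xi$. I want to show $(I-P^{-})\xi=0$, and the route I would attempt is a dimension count. I expect this step to be the main obstacle.

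The dimension count would go as follows. First show that $\mathcal{R}P^{+}+\mathcal{N}P^{-}=\mathbb{R}^{n}$ always holds, or directly that $\mathrm{Dim}\,\mathcal{R}P^{+}\le \mathrm{Dim}\,\mathcal{R}P^{-}$. Together with trivial intersection this gives $\mathbb{R}^{n}=\mathcal{R}P^{+}\oplus\mathcal{N}P^{-}$. If true, the statement seems to require additionally that $P^{+}$ and $P^{-}$ can be chosen (or are forced) so that $\mathcal{R}P^{+}\subseteq \mathcal{R}P^{-}$ and $\mathcal{N}P^{-}\subseteq\mathcal{N}P^{+}$. Since on each half-line only one of the two subspaces of each projector is uniquely determined ($\mathcal{R}P^{+}$ and $\mathcal{N}P^{-}$), I would try to use the freedom in the complementary subspaces. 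Specifically, I would redefine $\mathcal{N}P^{+}:=\mathcal{N}P^{-}$ and $\mathcal{R}P^{-}:=\mathcal{R}P^{+}$ at $e_{\ast}$, using the direct sum. The justification would be the standard fact that any complement of the stable subspace still yields a dichotomy on a half-line, possibly with a larger constant. With this choice, $P^{+}=I-P^{-}$ is the same projector, up to the complement, and \eqref{IdePr} holds. I would check carefully whether the intended reading of the lemma allows this choice of projectors; if not, the necessity direction may need to be interpreted as existence of projectors satisfying \eqref{IdePr}.
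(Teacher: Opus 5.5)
Your reduction to $s=e_{\ast}$, your translation of \eqref{IdePr} into $\mathcal{R}P^{+}\subseteq\mathcal{R}P^{-}$ and $\mathcal{N}P^{-}\subseteq\mathcal{N}P^{+}$, and your sufficiency direction are all fine. You are also right that necessity cannot hold for arbitrary \emph{given} projectors: one has to re-choose the free complements $\mathcal{N}P^{+}$ and $\mathcal{R}P^{-}$, which is what the paper does (``This allows us to take \ldots'').

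The gap is in \emph{how} you re-choose them. Your construction needs $\mathbb{R}^{n}=\mathcal{R}P^{+}\oplus\mathcal{N}P^{-}$, and this is false in general. The inequality $\textnormal{dim}\,\mathcal{R}P^{+}\le\textnormal{dim}\,\mathcal{R}P^{-}$ does follow from $\mathcal{R}P^{+}\cap\mathcal{N}P^{-}=\{0\}$. But it only gives $\textnormal{dim}(\mathcal{R}P^{+}\oplus\mathcal{N}P^{-})=n+i(A)\le n$, with equality exactly when the index of Definition \ref{index} vanishes. For a counterexample, replace $a$ by $-a$ in \eqref{example}, with $h(t)=t$ and $e_{\ast}=1$. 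The half-line dichotomies hold with $P^{-}=1$ and $P^{+}=0$, and every nontrivial solution is unbounded at both ends of $J$. Yet $\mathcal{R}P^{+}+\mathcal{N}P^{-}=\{0\}$ and $i(A)=-1$. Your prescription $\mathcal{N}P^{+}:=\mathcal{N}P^{-}=\{0\}$ would force $P^{+}=1$, contradicting Lemma \ref{ISF}. In fact your construction always makes $P^{+}=P^{-}$ (not $I-P^{-}$), i.e.\ it would yield a dichotomy on all of $J$. It would therefore show that a) and b) of Theorem \ref{FullDico} imply c), which this example refutes.

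The missing idea is an intermediate complement, as in the paper:
\begin{itemize}
\item Pick $W$ with $\mathbb{R}^{n}=\mathcal{R}P^{+}\oplus W\oplus\mathcal{N}P^{-}$ at $e_{\ast}$.
\item Declare $\mathcal{N}P^{+}:=W\oplus\mathcal{N}P^{-}$ and $\mathcal{R}P^{-}:=\mathcal{R}P^{+}\oplus W$.
\item Transport these to every $s\in J$ with $\Phi(s,e_{\ast})$.
\end{itemize}
This leaves $\mathcal{R}P^{+}$ and $\mathcal{N}P^{-}$ untouched, as Lemmas \ref{ISF} and \ref{ISB} require. It produces the two inclusions, hence \eqref{IdePr}, and works for every $i(A)\le 0$; in the example above $W=\mathbb{R}$ and nothing changes.

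You still need the fact you called standard, which the paper uses without comment: changing the free complement preserves the half-line dichotomies. It does hold here. For constant projectors as in Proposition \ref{ProyCte}, with $\Phi(e_{\ast})=I$ and $\mathcal{R}\widetilde{P}=\mathcal{R}P$ on $J^{+}$, write $\widetilde{P}=P+P\widetilde{P}(I-P)$. The extra term is then bounded by a constant times $h(t)^{-\alpha}h(s)^{-\alpha}$, which is dominated by the required rates because $h\ge 1$ on $J^{+}$. The argument on $J^{-}$ is symmetric, using $h\le 1$ there.
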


\begin{proof}

Firstly, let us assume that the unique bounded solution of the linear system \eqref{lin} is the trivial one, then by using lemmas \ref{ISF} and \ref{ISB} 
we can conclude that $\mathcal{R}P^{+}(s)\cap\mathcal{N}P^{-}(s)=\{0\}$, this fact combined with the identities 
$$
\mathbb{R}^n=\mathcal{R}P^{+}(s)\oplus\mathcal{N}P^{+}(s)=\mathcal{R}P^{-}(s)\oplus\mathcal{N}P^{-}(s)
$$
implies the existence of a subspace $V$ such that $\mathbb{R}^n=\mathcal{R}P^{+}(s)\oplus V\oplus \mathcal{N}P^{-}(s)$.  This allows us to take 
\begin{equation}
\label{sous-espaces}
\mathcal{N}P^{+}(s)=V\oplus \mathcal{N}P^{-}(s) \quad \textnormal{and} \quad \mathcal{R}P^{-}(s)=\mathcal{R}P^{+}(s)\oplus V
\end{equation}
and so $\mathcal{N}P^{-}(s)\subseteq\mathcal{N}P^{+}(s)$ and $\mathcal{R}P^{+}(s)\subseteq \mathcal{R}P^{-}(s)$.

Now, let $\xi\in\mathbb{R}^n$ such that  $\xi=\xi_1+\xi_2$ with $\xi_1\in\mathcal{R}P^{+}(s)$ and $\xi_2\in\mathcal{N}P^{+}(s)$. Then $P^{+}(s)\xi=\xi_1$ and, since $\xi_1\in \mathcal{R}P^{+}(s)\subset \mathcal{R}P^{-}(s)$, we get
$$
P^{-}(s)P^{+}(s)\xi=P^{-}(s)\xi_1=\xi_1=P^{+}(s)\xi,
$$
which shows that $P^{-}(s)P^{+}(s)=P^{+}(s)$ for any $s\in J$. 

Similarly, we can write $\xi=\xi_1+\xi_2 \in \mathbb{R}^{n}$ with $\xi_1\in\mathcal{R}P^{-}(s)$ and $\xi_2\in\mathcal{N}P^{-}(s)$, which leads to $P^{-}(s)\xi=\xi_1$. Moreover, as $\xi_{2}\in\mathcal{N}P^{-}(s)\subset \mathcal{N}P^{+}(s)$ we obtain
$$
P^{+}(s)P^{-}(s)\xi=P^{+}(s)\xi_1=P^{+}(s)\xi_1+P^{+}(s)\xi_2=P^{+}(s)\xi,
$$
and we have that $P^{+}(s)P^{-}(s)=P^{+}(s)$ for any $s\in J$. Finally, we conclude that $P^{-}(s)P^{+}(s)=P^{+}(s)P^{-}(s)=P^{+}(s)$ for any $s\in J$.

Conversely, suppose that the projections $P^{+}(s)$ and $P^{-}(s)$ for the dichotomies on  $[e_{\ast},+\infty)$ and $(a_{0},e_{\ast}]$ respectively can be chosen in such a way that the identities (\ref{IdePr}) are verified. Let $t\mapsto x(t)$ be a bounded solution of \eqref{lin}, then by lemmas \ref{ISF} and \ref{ISB} we can see that $x(s)\in\mathcal{R}P^{+}(s)\cap\mathcal{N}P^{-}(s)$, which allow us to deduce that $x(s)=P^{+}(s)x(s)=P^{+}(s)P^{-}(s)x(s)=P^{+}(s)0=0$ and so $x(t)$ is the trivial solution. This concludes the proof of the lemma. 
\end{proof}

\begin{remark}
\label{R33}
A careful reading of the above proof shows that the identity \eqref{sous-espaces} implies that
\begin{equation}
\label{contenciones}
\mathcal{N}P^{-}(s)\subseteq\mathcal{N}P^{+}(s) \quad \textnormal{and} \quad \mathcal{R}P^{+}(s)\subseteq \mathcal{R}P^{-}(s) \quad \textnormal{for any $s\in J$},
\end{equation}
which has interest on itself. In fact, an important question is to determine when \eqref{contenciones} are set identities or, equivalently, the subspace $V$ in \eqref{sous-espaces}
is $\{0\}$.
\end{remark}

\begin{lemma}\label{Lem-hexpanvar}
   Suppose that the system \eqref{lin} has  uniform $h$--dichotomies on the intervals $[e_{\ast},+\infty)$ and $(a_{0},e_{\ast}]$
   with projectors $P^{+}(s)$ and $P^{-}(s)$ respectively and  the unique bounded solution is the trivial one. Then \eqref{lin} is $h$--expansive on $J=(a_{0},+\infty)$. 
\end{lemma}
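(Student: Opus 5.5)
Given a solution $x(t)=\Phi(t,\tau)\xi$ and an interval $[a,b]\subset J$, I will write
\begin{displaymath}
x(t)=P^{+}(t)x(t)+[I-P^{-}(t)]x(t)+[P^{-}(t)-P^{+}(t)]x(t),
\end{displaymath}
with $P^{\pm}$ extended to $J$ as in Remark \ref{EXTPRO}. By Lemma \ref{LemPQvar}, the identities \eqref{IdePr} hold, so $P^{-}-P^{+}$ is a projector commuting with $P^{\pm}$. Its range is the subspace $V$ of \eqref{sous-espaces}, and this subspace is invariant.

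The cleanest route, and the one I will try first, is to show that $V=\{0\}$. Then $P^{-}=P^{+}=:P$, and we can check directly that \eqref{lin} has a uniform $h$--dichotomy on $J$ with projector $P$. Finally, I apply the implication (i)$\Rightarrow$(ii) of Proposition \ref{P2} on $J$, whose proof in \cite{E-P-R} carries over verbatim to any interval $\mathcal{I}$.

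To get the dichotomy estimate for $t\geq s$ with $s\leq e_{*}\leq t$, I factor
\begin{displaymath}
\Phi(t,s)P(s)=\Phi(t,e_{*})P^{+}(e_{*})\,\Phi(e_{*},s)P^{-}(s).
\end{displaymath}
Here I use $P^{+}=P^{-}$, and the $+$ and $-$ dichotomies give the two factors. Multiplying their bounds yields $K^{2}(h(t)/h(s))^{-\alpha}$, with $\alpha$ the minimum of the two exponents. The unstable part, and the cases where both $t$ and $s$ lie on the same side of $e_{*}$, are immediate.

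\textbf{Main obstacle: $V=\{0\}$.} I will argue by dimension. A nonzero $v\in V$ satisfies $v\in\mathcal{R}P^{-}(e_{*})$. By the $-$ dichotomy, $|\Phi(t,e_{*})v|\geq K^{-1}(h(e_{*})/h(t))^{\alpha}|v|\to\infty$ as $t\to a_{0}^{+}$. Also $v\in\mathcal{N}P^{+}(e_{*})$, so it grows as $t\to+\infty$. This is consistent with having no bounded solutions, so it does not by itself give a contradiction.

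Hence, if that fails, I will instead prove expansiveness directly without requiring $V=\{0\}$. For $x(t)$ with $a\leq t\leq b$, I split into three cases: $[a,b]\subset J^{+}$, $[a,b]\subset J^{-}$, or $a<e_{*}<b$. The first two follow from Proposition \ref{P2} (i)$\Rightarrow$(ii) on $J^{+}$, and from its mirror counterpart on $J^{-}$.

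In the straddling case I use the decomposition $x=x_{1}+x_{2}+x_{3}$ with $x_{1}=P^{+}x$, $x_{2}=(I-P^{-})x$, and $x_{3}=(P^{-}-P^{+})x$.
\begin{itemize}
\item $x_{1}$ decays forward on all of $J$, by the factorization argument above. This gives $|x_{1}(t)|\leq K^{2}h(t\ast a^{\ast-1})^{-\alpha}|P^{+}(a)x(a)|$.
\item Symmetrically, $x_{2}$ decays backward on all of $J$.
\item $x_{3}$ lies in $\mathcal{N}P^{+}\cap\mathcal{R}P^{-}$. It therefore grows forward on $J^{+}$ from $e_{*}$ and backward on $J^{-}$ from $e_{*}$, so $|x_{3}(t)|$ is controlled by $|x_{3}(b)|$ if $t\geq e_{*}$ and by $|x_{3}(a)|$ if $t\leq e_{*}$.
\end{itemize}

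To return to $|x(a)|$ and $|x(b)|$, I need to bound the projectors, for example $|P^{+}(a)x(a)|\leq\|P^{+}(a)\|\,|x(a)|$. Uniform boundedness of $\|P^{\pm}(s)\|$ on all of $J$, including the extended parts, is where I expect trouble. I would try to derive it from the angle between the invariant subspaces, using the no-bounded-solution hypothesis through $\mathcal{R}P^{+}\cap\mathcal{N}P^{-}=\{0\}$.
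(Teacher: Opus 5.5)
\textbf{Verdict: genuine gap.} The straddling case of your second route rests on a step that is false in general.

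Your first route cannot work, and not merely because it is unproved. $V=\{0\}$ does not follow from the hypotheses: the lemma is meant to cover systems of nonzero index, which by Theorem \ref{FullDico} have no dichotomy on $J$. The example below has $V=\mathcal{N}P^{+}\neq\{0\}$. So everything rests on the second route, and there the straddling case has a gap that the angle argument cannot close. The norms $\|P^{+}(s)\|$ for $s\in J^{-}$ and $\|P^{-}(s)\|$ for $s\in J^{+}$ need not be bounded, for any admissible choice of projectors.

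Take $h(t)=t$, $e_{\ast}=1$, and $A(t)=t^{-1}\begin{pmatrix}-1&1\\0&-1\end{pmatrix}$ on $(0,1]$. Continue it to $[1,+\infty)$ by a continuous upper triangular matrix equal to $t^{-1}\mathrm{diag}(-1,1)$ for $t\geq 2$.
\begin{itemize}
\item On $(0,1]$ one has $\Phi(s)=s^{-1}\begin{pmatrix}1&\ln s\\0&1\end{pmatrix}$, and the hypotheses hold.
\item No solution is bounded backward, so $P^{-}\equiv I$ is forced, and $\mathcal{R}P^{+}(s)=\mathrm{span}\{e_{1}\}$.
\item Any choice $\mathcal{N}P^{+}(1)=\mathrm{span}\{(c,1)\}$ is transported to $\mathcal{N}P^{+}(s)=\mathrm{span}\{(c+\ln s,1)\}$. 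Hence $P^{+}(s)e_{2}=-(c+\ln s)e_{1}$, which is unbounded as $s\to 0^{+}$.
\end{itemize}
It is worse than a missing bound. Take $x(a)=e_{2}$ and any $b\in(1,2]$. Then $|x_{1}(a)|=|c+\ln a|\to+\infty$ as $a\to 0^{+}$ (and likewise $|x_{3}(a)|$). Yet the right-hand side of \eqref{h-exp} at $t=a$ stays bounded, since $|x(1)|\approx a|\ln a|$. So no separate estimate of $x_{1}$ or $x_{3}$ in terms of $|x(a)|,|x(b)|$ can succeed. Only $x_{1}+x_{3}=P^{-}x$ is controlled on $J^{-}$, and only $x_{2}+x_{3}=[I-P^{+}]x$ on $J^{+}$.

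\textbf{The missing idea.} Never split off $V$. On each side of $e_{\ast}$, use the projector native to that side, and cross $e_{\ast}$ only at the level of operators, through \eqref{IdePr}.

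The paper does this by first estimating the solution at the junction. It uses three identities:
\begin{itemize}
\item $x(e_{\ast})=P^{+}(e_{\ast})P^{-}(e_{\ast})x(e_{\ast})+[I-P^{+}(e_{\ast})]x(e_{\ast})$,
\item $P^{-}(e_{\ast})x(e_{\ast})=\Phi(e_{\ast},a)P^{-}(a)x(a)$,
\item $[I-P^{+}(e_{\ast})]x(e_{\ast})=\Phi(e_{\ast},b)[I-P^{+}(b)]x(b)$.
\end{itemize}
From these it gets $|x(e_{\ast})|\leq K^{2}h(a)^{\alpha}|x(a)|+Kh(b)^{-\alpha}|x(b)|$, using only $\|P^{+}(e_{\ast})\|\leq K$ at the single point $e_{\ast}$. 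It then applies the half-line estimates on $[a,e_{\ast}]$ and $[e_{\ast},b]$.

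Your own factorization also does the job if you apply it to $x(a)$ rather than to $P^{+}(a)x(a)$. For $a\leq e_{\ast}\leq t\leq b$, the identity $P^{+}=P^{+}P^{-}$ gives
\begin{displaymath}
x(t)=\Phi(t,e_{\ast})P^{+}(e_{\ast})\Phi(e_{\ast},a)P^{-}(a)x(a)+\Phi(t,b)[I-P^{+}(b)]x(b),
\end{displaymath}
hence $|x(t)|\leq K^{2}h(t\ast a^{\ast-1})^{-\alpha}|x(a)|+Kh(b\ast t^{\ast-1})^{-\alpha}|x(b)|$. For $a\leq t\leq e_{\ast}\leq b$, the symmetric argument uses $I-P^{-}=(I-P^{-})(I-P^{+})$.
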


\begin{proof}
Without loss of generality it will be assumed that $K$ and $\alpha$ are the constants in both $h$--dichotomies. Moreover, given any non 
degenerate interval $[a,b]\subset J$, the proof will consider three cases: $e_{*}\leq a$, $e_{*}\geq b$ and $e_{*}\in [a,b]$.

\medskip 
\noindent \textit{Case 1: $e_{*}\geq b$}. Suppose that $t\mapsto x(t)$ is a solution of \eqref{lin} and $a_{0}<a\leq t\leq b\leq e_{\ast}$. 
By using $P^{-}(\cdot)+Q^{-}(\cdot)=I$, we have that
\begin{subequations}
  \begin{empheq}[left=\empheqlbrace]{align}
&
x(t)=\Phi(t,a)P^{-}(a)x(a)+\Phi(t,a)Q^{-}(a)x(a),  \label{HE1} \\
&
x(t)=\Phi(t,b)P^{-}(b)x(b)+\Phi(t,b)Q^{-}(b)x(b). \label{HE2}
 \end{empheq}
 \end{subequations}

By considering $t=b$ in (\ref{HE1}) we have 
\begin{displaymath}
x(b)=\Phi(b,a)P^{-}(a)x(a)+\Phi(b,a)Q^{-}(a)x(a),
\end{displaymath}
which replaced in the first right term of (\ref{HE2}) and recalling the invariance
 property (\ref{Invariance}) for $P^{-}(\cdot)$ leads to
\begin{displaymath}
\begin{array}{rcl}
x(t)&=&\Phi(t,b)P^{-}(b)\left\{\Phi(b,a)P^{-}(a)x(a)+\Phi(b,a)Q^{-}(a)x(a)\right\}+\Phi(t,b)Q^{-}(b)x(b)\\\\
&=&  \Phi(t,a)P^{-}(a)x(a)+\Phi(t,b)\Phi(b,a)P^{-}(a)Q^{-}(a)x(a)+\Phi(t,b)Q^{-}(b)x(b)\\\\
&=&  \Phi(t,a)P^{-}(a)x(a)+\Phi(t,b)Q^{-}(b)x(b).
\end{array}
\end{displaymath}

Now, by using the property of uniform $h$--dichotomy described by (\ref{alt-hd}) we can conclude that
\begin{equation}
\label{EE-MA2}
|x(t)|\leq  K\left\{h(t*a^{*-1})^{-\alpha}|x(a)|+h(b*t^{*-1})^{-\alpha}|x(b)|\right\},   
\end{equation}
and the $h$--expansiveness on $(a_{0},e_{*}]$ follows.

\noindent \textit{Case 2: $e_{*}\leq a$}. Suppose that $t\mapsto x(t)$ is a solution of \eqref{lin} and  $e_{\ast}\leq a\leq t\leq b$ then the estimation
    \begin{equation}
   \label{EE-MA}
    |x(t)|\leq K\left\{h(t\ast a^{\ast-1})^{-\alpha}|x(a)| + h(b\ast t^{\ast-1})^{-\alpha}|x(b)|\right\},
    \end{equation}
can be deduced analogously by considering $P^{+}(\cdot)$ instead of $P^{-}(\cdot)$ in (\ref{HE1})--(\ref{HE2})
and using $t=a$ in (\ref{HE2}).

\medskip

\medskip 
\noindent \textit{Case 3: $e_{*}\in [a,b]$}. As $t\mapsto x(t)$ is a solution of (\ref{lin}) by using the invariance properties (\ref{Invariance}) of the projectors $P^{-}(\cdot)$ and $P^{+}(\cdot)$, we can deduce that
\begin{subequations}
  \begin{empheq}[left=\empheqlbrace]{align}
&
P^{-}(a)x(a)=\Phi(a,e_{*})P^{-}(e_{*})x(e_{*}),  \label{EO1} \\
&
[I-P^{+}(b)]x(b)=\Phi(b,e_{*})[I-P^{+}(e_{*})]x(e_{*}). \label{EO2}
 \end{empheq}
 \end{subequations}

Moreover, as the unique bounded solution of (\ref{lin}) is the trivial one,  the identity $P^{+}(\cdot)=P^{+}(\cdot)P^{-}(\cdot)$ from Lemma \ref{LemPQvar} allow us 
to deduce that 
\begin{equation}
\label{itrip}
    x(e_{\ast})=P^{+}(e_{\ast})P^{-}(e_{\ast})x(e_{\ast})+[I-P^{+}(e_\ast)]x(e_{\ast}).
\end{equation}

As $a\leq e_{\ast}\leq b$, from the identities (\ref{EO1}),(\ref{EO2}) and (\ref{itrip}) combined with the identity $h(e_{\ast})=1$ and the dichotomy properties on
$(a_{0},e_{*}]$ and $[e_{*},+\infty)$, we can deduce that
   \begin{equation}
   \label{eunit}
    \begin{array}{rl}
        |x(e_{\ast})| &\leq |\Phi(e_{\ast},e_{\ast})P^{+}(e_{\ast})P^{-}(e_{\ast})x(e_{\ast})| + |[I-P^{+}(e_{\ast})]x(e_{\ast})|\\\\
        &\leq K|P^{-}(e_{\ast})x(e_{\ast})| + |[I-P^{+}(e_{\ast})]x(e_{\ast})|\\\\
        &= K|\Phi(e_{\ast},a)P^{-}(a)x(a)| + |\Phi(e_{\ast},b)[I-P^{+}(b)]x(b)|\\\\
        &\leq K^2(h(a))^{\alpha}|x(a)| + K(h(b))^{-\alpha}|x(b)|.
    \end{array}
\end{equation}
\noindent \textit{Subcase $a\leq e_{\ast}\leq t\leq b$}: By using a limit version of the Case 2 with $a=e_{*}$ and its corresponding estimation (\ref{EE-MA}) with $t\in [e_{*},b]$. It can be deduced that
the inequality (\ref{eunit}) combined with identity (\ref{group3}) imply that
    \begin{align*}
        |x(t)| &\leq K h(t\ast e^{\ast-1}_{\ast})^{-\alpha}|x(e_{\ast})| + Kh(b\ast t^{\ast-1})^{-\alpha}|x(b)|\\
        &\leq K(h(t))^{-\alpha}\left[K^2\left(\frac{1}{h(a)}\right)^{-\alpha}|x(a)|+K(h(b))^{-\alpha}|x(b)|\right] + Kh(b\ast t^{\ast-1})^{-\alpha}|x(b)|\\
        &= K^3\left(\frac{h(t)}{h(a)}\right)^{-\alpha}|x(a)| + K^2(h(t)h(b))^{-\alpha}|x(b)| + K\left(\frac{h(b)}{h(t)}\right)^{-\alpha}|x(b)|.
    \end{align*}

Moreover, by recalling that $e_{\ast}\leq t\leq b$ it follows that $1\leq h(t)\leq h(b)$ and so
    $$1\leq \frac{h(b)}{h(t)}\leq h(b)\leq h(t)h(b),$$
    which implies that 
    $$(h(t)h(b))^{-\alpha}\leq \left(\frac{h(b)}{h(t)}\right)^{-\alpha}.$$
    Thus, for $a\leq e_{\ast}\leq t\leq b$ it is concluded that
    \begin{displaymath}
    \begin{array}{rcl}
        |x(t)| &\leq&  \displaystyle K^3\left(\frac{h(t)}{h(a)}\right)^{-\alpha}|x(a)| + (K^2+K)\left(\frac{h(b)}{h(t)}\right)^{-\alpha}|x(b)| \\\
        &\leq& \displaystyle \tilde{K}\left\{h(t*a^{*-1})^{-\alpha}|x(a)|+
        h(b*t^{*-1})^{-\alpha}|x(b)|\right\},
        \end{array}
    \end{displaymath}
where $\tilde{K}= \max\{K^{3},K+K^{2}\}$
and the $h$--expansiveness follows.

\medskip
\noindent \textit{Subcase $a_{0}<a\leq t\leq e_{\ast}\leq b$}: Similarly, by using a limit version of the Case 1 with $b=e_{*}$ and its corresponding estimation (\ref{EE-MA2}) with $t\in [a,e_{*}]$. It can be deduced that
the (\ref{eunit}) and (\ref{group3}) imply that
    \begin{align*}
        |x(t)| &\leq K h(t\ast a^{\ast-1})^{-\alpha}|x(a)| + Kh(e_{\ast}\ast t^{\ast-1})^{-\alpha}|x(e_{\ast})|\\
        &\leq K\left(\frac{h(t)}{h(a)}\right)^{-\alpha}|x(a)| + K\left(\frac{1}{h(t)}\right)^{-\alpha}\left[K^2\left(\frac{1}{h(a)}\right)^{-\alpha}|x(a)| + K(h(b))^{-\alpha}|x(b)|\right]\\
        &= K\left(\frac{h(t)}{h(a)}\right)^{-\alpha}|x(a)| + K^3\left(\frac{1}{h(t)h(a)}\right)^{-\alpha}|x(a)| + K^2\left(\frac{h(b)}{h(t)}\right)^{-\alpha}|x(b)|.
    \end{align*}
    As $a\leq t\leq e_{\ast}$ then $0<h(a)\leq h(t)\leq 1$ and so
    $$h(a)h(t)\leq h(a)\leq\frac{h(a)}{h(t)}\leq 1.$$
    It follows that 
    $$\frac{1}{h(a)h(t)}\geq\frac{h(t)}{h(a)}\geq1,$$
    which in turn implies that
    $$\left(\frac{1}{h(a)h(t)}\right)^{-\alpha}\leq\left( \frac{h(t)}{h(a)}\right)^{-\alpha}.$$
        Thus, for $a\leq t\leq e_{\ast}\leq b$ it is concluded that
    \begin{displaymath}
    \begin{array}{rcl}
        |x(t)| &\leq & \displaystyle (K+K^3)\left(\frac{h(t)}{h(a)}\right)^{-\alpha}|x(a)| + K^2\left(\frac{h(b)}{h(t)}\right)^{-\alpha}|x(b)|\\\\
   &\leq & \displaystyle K'\left\{h(t*a^{*-1})^{-\alpha}|x(a)|+ h(b*t^{*-1})^{-\alpha}|x(b)|\right\},     
        \end{array}
    \end{displaymath}
where $K'=\max\{K+K^{3},K^{2}\}$ and the $h$--expansiveness follows by gathering all the above cases.
\end{proof}

\subsection{Generalization of Proposition \ref{P2} to $\mathcal{I}=J$.}

\begin{theorem}
\label{PJ}
    If the system \eqref{lin} has a uniform bounded $h$--growth on $[e_{\ast},+\infty)$ and a uniform bounded $h$--decay on $(a_{0},e_{\ast}]$. Then the following three statements are equivalent:
    \begin{enumerate}
        \item[(i)] The system \eqref{lin} has uniform $h$--dichotomies on $[e_{\ast},+\infty)$ and $(a_{0},e_{\ast}]$ and has no nontrivial bounded solution.
        \item[(ii)] The system \eqref{lin} is $h$--expansive on $J$.
        \item[(iii)] The system \eqref{lin} is uniformly $h$--noncritical on $J$.
    \end{enumerate}
    Moreover, without the assumption of uniform bounded $h$--growth and $h$--decay, it is still true that the following implications are verified: \textnormal{(i)}$\Rightarrow$ \textnormal{(ii)} $\Rightarrow$ \textnormal{(iii)}.
\end{theorem}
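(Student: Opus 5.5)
The equivalence will follow from the cycle (i)$\Rightarrow$(ii)$\Rightarrow$(iii)$\Rightarrow$(i). The first two arrows will use no growth hypothesis; the bounded $h$--growth and $h$--decay assumptions are used only in the last one.

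\textbf{(i)$\Rightarrow$(ii).} This is exactly Lemma \ref{Lem-hexpanvar}, so nothing new is needed.

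\textbf{(ii)$\Rightarrow$(iii).} Let $L,\beta$ be the expansiveness constants. Fix $T>e_{\ast}$ so large that $\theta:=2Lh(T)^{-\beta}<1$. Take any solution $x(\cdot)$ and any $t\in J$, and set $a:=t\ast T^{\ast-1}$ and $b:=t\ast T$. These lie in $J$ because $J$ is a group under $\ast$, so $[a,b]\subset J$ holds automatically. By \eqref{group2} and \eqref{LP}, $h(t\ast a^{\ast-1})=h(b\ast t^{\ast-1})=h(T)$. Applying \eqref{h-exp} on $[a,b]$ then gives
\[
|x(t)|\leq Lh(T)^{-\beta}\bigl(|x(a)|+|x(b)|\bigr)\leq \theta\sup_{|u\ast t^{\ast-1}|_{\ast}\leq T}|x(u)|.
\]
The endpoint $a$ is excluded from the half-open window in \eqref{NCP}. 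I will handle this either by continuity of $x$, or by applying the estimate on the slightly smaller interval $[t\ast S^{\ast-1},b]$ with $S<T$ close to $T$.

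\textbf{(iii)$\Rightarrow$(i).} By Remark \ref{ANCU}, uniform $h$--noncriticality on $J$ gives noncriticality on $J^{+}$ and on $J^{-}$. Together with bounded $h$--growth on $J^{+}$, Proposition \ref{P1} yields a uniform $h$--dichotomy on $[e_{\ast},+\infty)$. Together with bounded $h$--decay on $J^{-}$, Proposition \ref{P1.1} yields one on $(a_{0},e_{\ast}]$.

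It remains to exclude nontrivial bounded solutions, and I expect this to be the only genuinely new point. Suppose $x$ is a nontrivial bounded solution and put $M:=\sup_{J}|x|>0$. First I would show that $|x(t)|\to0$ as $t\to+\infty$ and as $t\to a_{0}^{+}$:
\begin{itemize}
\item By Lemma \ref{ISF}, $x(s)\in\mathcal{R}P^{+}(s)$ for $s\geq e_{\ast}$, so the dichotomy gives $|x(t)|\leq K(h(t)/h(s))^{-\alpha}|x(s)|\to0$ as $t\to+\infty$.
\item By Lemma \ref{ISB}, $x(s)\in\mathcal{N}P^{-}(s)$ for $s\leq e_{\ast}$, so the same kind of estimate gives $|x(t)|\to0$ as $t\to a_{0}^{+}$.
\end{itemize}
Hence $|x|$ is continuous and tends to $0$ at both ends of $J$, so it attains its maximum $M$ at some $t^{\ast}\in J$. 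Noncriticality at $t^{\ast}$ then gives $M=|x(t^{\ast})|\leq\theta M<M$, a contradiction. So \eqref{lin} has no nontrivial bounded solution, and (i) holds.

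The only delicate checks are the decay at both ends, which relies on Lemmas \ref{ISF} and \ref{ISB}, and the fact that noncriticality on $J$ applies at every $t^{\ast}\in J$. The latter holds because the window $[t^{\ast}\ast T^{\ast-1},t^{\ast}\ast T]$ is always contained in $J$.
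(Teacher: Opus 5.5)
Your proof is correct. For (i)$\Rightarrow$(ii), for (ii)$\Rightarrow$(iii), and for obtaining the two half-line dichotomies in (iii)$\Rightarrow$(i), it coincides with the paper's argument. Your remark about the half-open window in \eqref{NCP}, handled by continuity of $x$, is a detail the paper passes over silently.

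You diverge only in excluding nontrivial bounded solutions, and this step is easier than you anticipated. Noncriticality on $J$ gives $|x(t)|\leq\theta\sup_{|u\ast t^{\ast-1}|_{\ast}\leq T}|x(u)|\leq\theta M$ for every $t\in J$. Taking the supremum over $t$ yields $M\leq\theta M$, hence $M=0$, with no need for the maximum to be attained; this is the paper's argument.

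Your detour is valid: you use Lemmas \ref{ISF} and \ref{ISB} to show that $x$ decays at both ends of $J$, so that $|x|$ attains its maximum. However, it needs the half-line dichotomies, and therefore the bounded $h$--growth and $h$--decay hypotheses. The one-line supremum argument shows that uniform $h$--noncriticality on $J$ alone rules out nontrivial bounded solutions, with no growth assumption at all.
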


\begin{proof} The implication  (i) $\Rightarrow$ (ii) follows directly from the Lemma \ref{Lem-hexpanvar} recently proved. On the other hand,
despite the implication (ii) $\Rightarrow$ (iii) can be proved similarly as in the proof of Theorem 2 in \cite{E-P-R}, it will be useful to do it again in order to make this article the most self-contained as possible. In fact, if $t\mapsto x(t)$ is a solution of (\ref{lin}), there exists positive constants $L$ and $\beta$ and $[a,b]\subset J$ such that (\ref{h-exp}) is verified for any $t\in [a,b]$.

As $h(t)\to +\infty$ and $\beta>0$, we can choose $T>e_{*}$ such that $\theta:=2Lh(T)^{-\beta}<1$ and, by using (\ref{order})--(\ref{order2}), we can deduce that
$e_{*}>T^{*-1}$ and $t \in (t*T^{*-1},t*T)$ for any $t\in J$.

By considering $a=t*T^{*-1}$ and $b=t*T$, the inequality (\ref{h-exp}) becomes
\begin{displaymath}
\begin{array}{rcl}
|x(t)| &\leq & L\left\{h(T)^{-\beta}|x(t*T^{*-1})|+h(T)^{-\beta}|x(t*T)|\right\} \\\\
&\leq &  \theta \sup\limits_{u\in [t*T^{-1},t*T]}|x(u)|,
\end{array}
\end{displaymath}
and the property of uniform $h$--noncriticality on $J$ is fulfilled.

    Finally, in order to proof the implication (iii)$\Rightarrow$(i), we will suppose that \eqref{lin} is uniformly $h$--noncritical on $J$. Then, by Remark \ref{ANCU}, the system \eqref{lin} is uniformly $h$--noncritical in both $[e_{*},+\infty)$ and
    $(a_{0},e_{*}]$. Now, as \eqref{lin} has a uniform bounded $h$--growth on $[e_{\ast},+\infty)$ and a uniform bounded $h$--decay on $(a_{0},e_{\ast}]$ it follows by Propositions \ref{P1} and \ref{P1.1} that \eqref{lin} has uniform $h$--dichotomies on $[e_{\ast},+\infty)$ and $(a_{0},e_{\ast}]$ respectively.

    Now let $t\mapsto x(t)$ be a bounded solution of \eqref{lin} such that 
    $$\|x\|:=\sup\limits_{t\in J}|x(t)|<+\infty.$$

As (iii) is verified, by using Definition \ref{NCUTR} with $\mathcal{I}=J$ it follows that for any $t\in J$ we have the existence of $\theta\in (0,1)$ and $T>e_{\ast}$ such that 
    $$|x(t)|\leq\theta\sup\limits_{|u\ast t^{\ast-1}|_{\ast}\leq T}|x(u)|\leq \theta\|x\|,$$
    which implies that $\|x\|\leq\theta\|x\|$. Hence, $\|x\|=0$ and so $x(t)=0$ for all $t\in J$. This concludes that \eqref{lin} has no nontrivial solution and the proof of the theorem is complete.
\end{proof}

\subsection{Uniform $h$--dichotomy on $J$}

As we have seen at the beginning of this section, if the linear system (\ref{lin}) has simultaneously the properties of uniform $h$-dichotomy on the half lines $J^{-}$ and $J^{+}$ this does not always implies the existence of  an $h$--dichotomy on the full line $J$. Now, we will provide a necessary and sufficient condition ensuring the uniform $h$--dichotomy on $J$. In this context, we recall the following definition adapted from \cite[p.536]{Battelli} to a general uniform $h$--dichotomy. 
\begin{definition}
\label{index}
 The \textit{index} of a $n$--dimensional linear system $\dot{x}=A(t)x$ with uniform $h$--dichotomies both on $J^{-}=(a_{0},e_{*}]$ and $J^{+}=[e_{*},+\infty)$ respectively, is given by
 $$
 i(A)=\textnormal{\text{dim}}\,\mathcal{R}P^{+}(\cdot)+\textnormal{\text{dim}}\,\mathcal{N} P^{-}(\cdot)-n,
 $$
where $P^{-}(\cdot)$ and $P^{+}(\cdot)$ are the projections for the dichotomies on $J^{-}$ and $J^{+}$ respectively.
\end{definition}

We point out that as $P^{-}(\cdot)$ and $P^{+}(\cdot)$ fulfills the invariance property (\ref{Invariance})
on $J^{-}$ and $J^{+}$ respectively then the dimensions are invariant.

The following result generalizes Proposition 1 from \cite{Battelli} for the uniform $h$--dichotomy:
\begin{theorem}
\label{FullDico}
The system \eqref{lin} has a uniform $h$--dichotomy on $J$
if and only if: 
\begin{itemize}
\item[a)] The system has uniform $h$--dichotomies on $J^{-}$ and $J^{+}$
with projections $P^{-}(\cdot)$ and $P^{+}(\cdot)$ respectively,
\item[b)] The system has no nontrivial bounded solutions, 
\item[c)] The index of the linear system is $i(A)=0$.
\end{itemize}
 \end{theorem}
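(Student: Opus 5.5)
For the necessity, I will assume that \eqref{lin} has a uniform $h$--dichotomy on $J$ with projector $P(\cdot)$ and constants $(K,\alpha)$. Restricting $P(\cdot)$ to $J^{-}$ and $J^{+}$ gives a) directly, with $P^{-}=P^{+}=P$. Lemma \ref{FDIA} gives b). For c), I will use Lemmas \ref{ISF} and \ref{ISB}. The set of initial data at $s$ producing solutions bounded on $[s,+\infty)$ is an intrinsic subspace: it does not depend on which dichotomy projector is chosen. Hence $\mathcal{R}P^{+}(s)=\mathcal{R}P(s)$ for any admissible $P^{+}$. In the same way $\mathcal{N}P^{-}(s)=\mathcal{N}P(s)$, the set of data giving solutions bounded in backward time. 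Since $\mathbb{R}^{n}=\mathcal{R}P(s)\oplus\mathcal{N}P(s)$, the dimensions add to $n$, so $i(A)=0$. One point needs care: Lemma \ref{ISF} is stated for $s\geq e_{*}$, so I will transport the subspaces along the flow to a common base point. The relations $\Phi(t,s)\mathcal{R}P(s)=\mathcal{R}P(t)$ make this routine.

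For the sufficiency, I will assume a), b) and c), with the projectors extended to all of $J$ as in Remark \ref{EXTPRO}. By b) and Lemma \ref{LemPQvar}, together with Remark \ref{R33}, we have $\mathbb{R}^{n}=\mathcal{R}P^{+}(s)\oplus V\oplus\mathcal{N}P^{-}(s)$. Condition c) forces $\textnormal{Dim}\,V=0$. Therefore $\mathcal{R}P^{+}(s)=\mathcal{R}P^{-}(s)$ and $\mathcal{N}P^{+}(s)=\mathcal{N}P^{-}(s)$. A projector is determined by its range and kernel, so $P^{+}(s)=P^{-}(s)=:P(s)$ for every $s\in J$. This $P(\cdot)$ is invariant on all of $J$ by Remark \ref{EXTPRO}.

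It remains to prove the estimates \eqref{HD} on $J$ for this single projector. I take $K,\alpha$ common to both half-line dichotomies. If $t$ and $s$ lie on the same side of $e_{*}$, the estimates are given. Suppose $s\leq e_{*}\leq t$. Invariance and $P^{2}=P$ give
\[
\Phi(t,s)P(s)=\Phi(t,e_{*})P(e_{*})\,\Phi(e_{*},s)P(s).
\]
Using the $J^{+}$ estimate for the first factor and the $J^{-}$ estimate for the second, together with $h(e_{*})=1$, I get
\[
\|\Phi(t,s)P(s)\|\leq K^{2}h(t)^{-\alpha}h(s)^{\alpha}=K^{2}\left(\frac{h(t)}{h(s)}\right)^{-\alpha}.
\]
The same factorization through $e_{*}$ handles $Q(s)$ when $t\leq e_{*}\leq s$.

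The step I expect to be the main obstacle is the identification $P^{+}=P^{-}$. On $J^{-}$ the projector $P^{-}$ is the original one. On $J^{+}$, however, the object that appears is the flow extension of $P^{-}$ from Remark \ref{EXTPRO}. These must be compared with $P^{+}$ at the same point. The cleanest route is to work at the base point $e_{*}$, where both projectors are originally defined: there the equality of ranges and kernels gives $P^{+}(e_{*})=P^{-}(e_{*})$. Invariance then propagates the equality to all of $J$, because both extensions are conjugations of this one matrix by $\Phi$. Finally, for the mixed estimate I will use only $\|P(e_{*})\|\leq K$, so no extra bounded-growth hypothesis is needed.
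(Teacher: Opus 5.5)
Your proposal follows the paper's proof: the identification $P^{+}=P^{-}$ comes, exactly as there, from Lemma~\ref{LemPQvar}, Remark~\ref{R33} and $i(A)=0$. Your additions supply steps the paper only asserts, namely the check via Lemmas~\ref{ISF} and~\ref{ISB} that $i(A)$ does not depend on the chosen projectors, and the factorization $\Phi(t,s)P(s)=\Phi(t,e_{*})P(e_{*})\Phi(e_{*},s)P(s)$ for $s\leq e_{*}\leq t$ (with its analogue for $Q$).

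Be aware that, like the paper, you implicitly use the projectors as re-chosen in the proof of Lemma~\ref{LemPQvar}. Only $\mathcal{R}P^{+}$ and $\mathcal{N}P^{-}$ are intrinsic, so for arbitrary half-line projectors you should first replace both by the projector whose range is $\mathcal{R}P^{+}(e_{*})$ and whose kernel is $\mathcal{N}P^{-}(e_{*})$; these two subspaces are complementary by b) and c). You should then check, e.g.\ via $P-P^{+}=P^{+}(P-P^{+})(I-P^{+})$ and the same factorization through $e_{*}$, that the half-line estimates survive with a larger constant.
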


\begin{proof}
Firstly, if the linear system has a uniform $h$--dichotomy on $J$ with projector $P(\cdot)$,
the properties a) and c) are verified straightforwardly with $P(\cdot)=P^{-}(\cdot)=P^{+}(\cdot)$. In addition,
the property b) follows from Lemma \ref{FDIA}.

Secondly, let us assume that the properties a),b) and c) are fulfilled. Now, we will prove that:
\begin{equation}
\label{esgalite-sub}
\mathcal{N} P^{-}(t) = \mathcal{N} P^{+}(t) \quad \textnormal{and} \quad  \mathcal{R} P^{+}(t) = \mathcal{R} P^{-}(t).
\end{equation}

In order to prove the above identity, notice that by a) and b) we can use the Lemma \ref{LemPQvar} and Remark \ref{R33}, which implies the contentions
$\mathcal{N}P^{-}(t)\subseteq \mathcal{N}P^{+}(t)$ and  $\mathcal{R}P^{+}(s)\subseteq \mathcal{R}P^{-}(s)$.

Now, if (\ref{esgalite-sub}) is not verified, we will have that $\mathcal{N} P^{-}(t) \subset \mathcal{N} P^{+}(t)$ 
and/or $\mathcal{R}P^{+}(t)\subset \mathcal{R}P^{-}(t)$.

In case that  $\mathcal{N} P^{-}(t) \subset \mathcal{N} P^{+}(t)$,  we have 
$\text{dim}\, \mathcal{N} P^{-}(t)<\text{dim} \,\mathcal{N} P^{+}(t)$, which combined with $i(A)=0$ leads to
\begin{displaymath}
\begin{array}{rcl}
n &=& \text{dim}\, \mathcal{N} P^{-}(t) + \text{dim} \, \mathcal{R}\, P^{+}(t) \\
&<&  \text{dim}\, \mathcal{N} P^{+}(t) + \text{dim} \,\mathcal{R}\, P^{+}(t)=n,
\end{array}
\end{displaymath}
obtaining a contradiction and   $\mathcal{N} P^{-}(t)=\mathcal{N} P^{+}(t)$ follows. Moreover, the 
identity $\mathcal{R}\, P^{+}(t) = \mathcal{R}\, P^{-}(t)$ can be proved in
a similar way.

As $P^{-}(t)\xi\in \mathcal{R} P^{-}(t)=\mathcal{R} P^{+}(t)$ for any $\xi\in \mathbb{R}^{n}$ we can easily conclude that
$P^{+}(t)P^{-}(t)\xi=P^{-}(t)\xi$. Moreover, by using the statements a) and b) combined with Lemma \ref{LemPQvar}
and the identity (\ref{IdePr}) we also have that $P^{+}(t)P^{-}(t)\xi=P^{+}(t)\xi$. By gathering these identities we can see that
$$
P^{-}(t)\xi = P^{+}(t)\xi \quad \textnormal{for any $\xi \in \mathbb{R}^{n}$},
$$
and the identity $P^{+}(t)=P^{-}(t)$ follows, this implies that the linear system has
a uniform $h$-dichotomy on $J$ with the same projector $P(\cdot)=P^{+}(\cdot)=P^{-}(\cdot)$.
\end{proof}

The above result allow us to revisit the scalar equation $x'=a(t)x$ described by (\ref{example})  at the beginning
of this section 3 with $J=(0,+\infty)$, $e_{*}=1$ and $h(t)=t$,  where the equation has $h$ dichotomies on $(0,1]$
and $[1,+\infty)$ with projectors $P^{-}(t)=0$ and $P^{+}(t)=1$ respectively. As $\mathcal{R}P^{+}=\mathcal{N}P^{-}=\mathbb{R}$,
it follows that $i(a)=1$ and the $h$--dichotomy on $J$ is not verified.

\section{Generalized Floquet theory  and uniform $h$--dichotomy}
In \cite[p.180]{Burton}, T.A. Burton and J.S. Muldowney proposed a generalization of the Floquet theory for
(\ref{lin}) by coining the following definition tailored for our purposes:
\begin{definition}
\label{GFlo}
The linear system \eqref{lin} is a generalized Floquet system with respect to $f$ if $f$ is an absolutely
continuous function on $J:=(a_{0},+\infty)$ such that 
\begin{equation}
\label{BM1}
f'(t)A(f(t))=A(t)
\end{equation}
for almost all $t>a_{0}$ and
\begin{equation}
\label{BM2}
f(t)>t
\end{equation}
for all $t>a_{0}$.
\end{definition}

In this section we will work with the specific function $f(t)=t*T$, where the group
$(J,*)$ is defined in terms of a differentiable growth rate  $h(\cdot)$ and $T>e_{*}$. In addition, we will assume
that the matrix valued function $A(\cdot)$ from the system (\ref{lin})
fulfills the following property:
\begin{equation}
\label{GFS}
(t*T)'A(t*T) = \frac{h'(t)h(T)}{h'(t*T)}A(t*T)=A(t)  \quad \textnormal{for any $t\in J$},
\end{equation}
where the first identity follows by $h(t*T)=h(t)h(T)$ from  (\ref{group0}) which implies that $h'(t*T)(t*T)'=h'(t)h(T)$. Moreover, $t*T>t$ for any $t\in J$ since $T>e_{*}$. Notice that if $h(t)=e^{t}$ then 
$(J,*)=(\mathbb{R},+)$ and (\ref{GFS}) becomes $A(t+T)=A(t)$ for any $t\in \mathbb{R}$.

In \cite{Burton} it is shown that the properties (\ref{BM1})--(\ref{BM2}) allow to emulate
the construction of the monodromy matrix similarly as in the classical Floquet's theory \cite[Ch.1]{Adrianova},\cite[pp.118--121]{Hale}. Obviously,
these results can be applied for the particular case of Definition \ref{GFlo} given by the condition (\ref{GFS}). Nevertheless, we will see that
our restriction to this specific case combined with the underlying properties of the group $(J,*)$ will provide us a criterion for detect the uniform $h$--dichotomy.

\subsection{Technical results}
In order to make this section self contained, we will revisit some results from \cite{Burton} tailored for the assumption (\ref{GFS}).

\begin{lemma}
If the linear system \eqref{lin} fulfills the property \eqref{GFS} and $\Phi(t)$ is a fundamental matrix of \eqref{lin} then
$\Psi(t):=\Phi(t*T)$ is also a fundamental matrix of \eqref{lin}.
\end{lemma}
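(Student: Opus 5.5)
The plan is a direct chain-rule computation followed by a nonsingularity check. Set $f(t):=t*T=h^{-1}(h(t)h(T))$. Since $h$ is a differentiable growth rate, $f$ is differentiable on $J$. Differentiating the identity $h(f(t))=h(t)h(T)$ from (\ref{group0}) gives $h'(f(t))f'(t)=h'(t)h(T)$, which is the derivative formula already recorded in (\ref{GFS}).

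\textbf{Step 1 (differential equation).} Define $\Psi(t)=\Phi(f(t))$. Because $\Phi$ is $C^1$ and $f$ is differentiable, the chain rule gives
\begin{displaymath}
\Psi'(t)=f'(t)\,\Phi'(f(t))=f'(t)A(f(t))\Phi(f(t))=(t*T)'A(t*T)\,\Psi(t).
\end{displaymath}
By hypothesis (\ref{GFS}), $(t*T)'A(t*T)=A(t)$ for all $t\in J$. Hence $\Psi'(t)=A(t)\Psi(t)$, so every column of $\Psi$ solves (\ref{lin}).

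\textbf{Step 2 (well-definedness and nonsingularity).} First, $f$ maps $J$ into $J$, because $(J,*)$ is a group, so $\Psi$ is defined on all of $J$. Second, $\det\Psi(t)=\det\Phi(t*T)\neq 0$, since $\Phi$ is invertible at every point of $J$. Therefore $\Psi$ is a fundamental matrix of (\ref{lin}).

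\textbf{Main obstacle.} The only delicate point is the differentiability of $f$. The relation $h'(f(t))f'(t)=h'(t)h(T)$ determines $f'(t)$ only where $h'(f(t))\neq 0$. However, (\ref{GFS}) is written with $h'(t*T)$ in the denominator, so it implicitly assumes $h'\neq 0$ there. Under that assumption, $f$ is differentiable by the inverse function theorem applied to $h$.

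If one prefers to avoid this issue, there is an alternative in the spirit of Definition \ref{GFlo}, where $f$ need only be absolutely continuous. In that case I would write
\begin{displaymath}
\Psi(t)=\Psi(t_0)+\int_{t_0}^{t}A(s)\Psi(s)\,ds
\end{displaymath}
using the change-of-variables formula for absolutely continuous $f$. The right-hand side is $C^1$, so $\Psi$ satisfies (\ref{lin}) everywhere.
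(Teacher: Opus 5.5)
Your proof is correct and uses the same chain-rule computation as the paper, $\Psi'(t)=(t*T)'A(t*T)\Phi(t*T)=A(t)\Psi(t)$ via (\ref{GFS}). Your explicit checks that $t*T\in J$ and $\det\Psi(t)\neq 0$ are steps the paper leaves implicit, and your remark on the differentiability of $t\mapsto t*T$ correctly identifies a tacit assumption ($h'(t*T)\neq 0$) built into (\ref{GFS}).
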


\begin{proof}
By using the fact that $\Phi(\cdot)$ is a fundamental matrix combined with the chain rule and (\ref{GFS}) it follows that
\begin{displaymath}
\begin{array}{rcl}
\Psi'(t) &=& (t*T)'\Phi'(t*T) \\\\
&=& (t*T)'A(t*T)\Phi(t*T) \\\\
&=& A(t)\Psi(t).
\end{array}
\end{displaymath}
\end{proof}

As the columns of $\Phi(t)$ and $\Phi(t*T)$ conform basis of solutions of (\ref{lin}), there exists
a non singular matrix $V$ such that
\begin{equation}
\label{mono1}
\Phi(t*T)=\Phi(t)V  \quad \textnormal{for any $t\in J$}.
\end{equation}

Similarly as in the classical Floquet's theory, the matrix $V$ will be called as the \textit{monodromy matrix}.
The eigenvalues of $V$ will be called as the $h$--Floquet multipliers.

\begin{remark}
\label{remarque-Floquet}
Some direct consequences of \eqref{mono1} are:
\begin{itemize}
\item[i)] The "biperiodicity" property
\begin{displaymath}
\Phi(t*T,s*T)=\Phi(t,s) \quad   \textnormal{for any $t,s\in J$}.  
\end{displaymath}
\item[ii)] The property
\begin{equation}
\label{mono-n}
\Phi(t*T^{*n})=\Phi(t)V^{n} \quad \textnormal{for any $n\in \mathbb{N}$}.
\end{equation}
\item[iii)] If $\Phi(e_{*})=I$, then it follows that $\Phi(T)=V$. 
\end{itemize}
\end{remark}

\begin{lemma}
\label{TF3}
Under the assumption \eqref{GFS}, a number $\rho \in \mathbb{C}$ is an $h$--Floquet's multiplier of the system \eqref{lin} if and only if there exists a nontrivial solution $t\mapsto x(t)$ such that
\begin{equation}
\label{Floquet4}
x(t*T)=\rho \,x(t) \quad \textnormal{for any $t\in J$}
\end{equation}
\end{lemma}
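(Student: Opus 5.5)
The plan is the classical Floquet argument, with the shift $t\mapsto t+T$ replaced by $t\mapsto t*T$ and the identity \eqref{mono1} as the only tool. Throughout, fix a fundamental matrix $\Phi(t)$ and its monodromy matrix $V$, so that $\Phi(t*T)=\Phi(t)V$ for every $t\in J$. Since the problem is linear, I will work over $\mathbb{C}$ and allow complex eigenvectors and complex solutions.

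\emph{Necessity.} Let $\rho$ be an $h$--Floquet multiplier, i.e.\ an eigenvalue of $V$, and choose $v\in\mathbb{C}^{n}\setminus\{0\}$ with $Vv=\rho v$. Define $x(t):=\Phi(t)v$. This is a solution of \eqref{lin}, and it is nontrivial because $\Phi(t)$ is invertible and $v\neq 0$. By \eqref{mono1},
\[
x(t*T)=\Phi(t*T)v=\Phi(t)Vv=\rho\,\Phi(t)v=\rho\,x(t)\quad\textnormal{for any $t\in J$},
\]
which is exactly \eqref{Floquet4}.

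\emph{Sufficiency.} Let $x$ be a nontrivial solution satisfying \eqref{Floquet4}. Every solution has the form $x(t)=\Phi(t)v$ with $v=\Phi^{-1}(t_{0})x(t_{0})$ for any $t_{0}$, and $v\neq 0$ because $x$ is nontrivial. Applying \eqref{mono1} gives
\[
\Phi(t)Vv=\Phi(t*T)v=x(t*T)=\rho\,x(t)=\rho\,\Phi(t)v .
\]
Multiplying on the left by $\Phi^{-1}(t)$ yields $Vv=\rho v$ with $v\neq 0$, so $\rho$ is an eigenvalue of $V$, that is, an $h$--Floquet multiplier.

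I expect no real obstacle; there are two small points to make explicit. First, if $\rho$ is non-real then $v$, and hence $x$, are complex, so the statement must be read over $\mathbb{C}$. If a real formulation is needed, one can take real and imaginary parts of $x$, but these satisfy the relation only up to a rotation. Second, the multipliers do not depend on the choice of fundamental matrix. If $\tilde\Phi(t)=\Phi(t)C$ with $C$ nonsingular, then $\tilde\Phi(t*T)=\Phi(t)VC=\tilde\Phi(t)\,C^{-1}VC$. Hence $\tilde V=C^{-1}VC$ is similar to $V$ and has the same eigenvalues, so the statement is well posed.
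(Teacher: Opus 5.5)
Your proof is correct and follows essentially the same route as the paper: take $x(t)=\Phi(t)v$ for an eigenvector $v$ of $V$ and apply \eqref{mono1}, and conversely write $x(t)=\Phi(t)v$ and read off $Vv=\rho v$. The paper normalizes $\Phi(e_{*})=I$ so that $V=\Phi(T)$ and evaluates at $t=e_{*}$, whereas you cancel $\Phi(t)$ directly. Your remark that $V$ is determined only up to similarity makes explicit the independence of the choice of fundamental matrix that the paper's ``without loss of generality'' uses silently.
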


\begin{proof}
Without loss of generality, it can be assumed that $\Phi(e_{*})=I$, which leads to $\Phi(T)=V$ according to Remark \ref{remarque-Floquet}.

Firstly, it will be assumed that $\rho$ is an eigenvalue of the monodromy matrix $\Phi(T)=V$. Then, by definition we have the existence of $\xi\in \mathbb{C}^{n}\setminus \{0\}$ such 
that $\Phi(T)\xi=\rho \, \xi$. Now, let us consider the solution $t\mapsto x(t)$ of the initial value problem
\begin{displaymath}
\left\{\begin{array}{rcl}
x'&=&A(t)x \\
x(e_{*})&=&\xi.
\end{array}\right.
\end{displaymath}

It is straightforward to see that $x(t)=\Phi(t)\xi$ for any $t\in J$, and we can see that (\ref{mono1}) implies that
$$
x(t*T)=\Phi(t*T)\xi=\Phi(t)V\xi=\rho\, x(t)
$$
and the identity (\ref{Floquet4}) follows.

Secondly, let us assume that (\ref{Floquet4}) is verified. By evaluating this identity at $t=e_{*}$ we obtain that
$$
x(T)=\rho \, x(e_{*}).
$$

On the other hand, as any non trivial solution $t\mapsto x(t)$ of (\ref{lin}) is given by $x(t)=\Phi(t)x(e_{*})$, this allows to 
see that
$$
x(T)=\Phi(T)x(e_{*})=V\,x(e_{*}).
$$

As the right parts of the above identities are equal to $x(T)$ we have the identity
$$
Vx(e_{*})=\rho\, x(e_{*}) 
$$
and it follows that $\rho$ is an eigenvalue of $V$. 
\end{proof}

\subsection{Uniform $h$--dichotomy for generalized Floquet systems}
\begin{theorem}
\label{FDE}
If the linear system \eqref{lin} fulfills the property \eqref{GFS} the following properties
are equivalent:
\begin{itemize}
\item[i)] The eigenvalues of the monodromy matrix $V$ are not in the unit circle, 
\item[ii)] The linear system \eqref{lin} has a uniform $h$--dichotomy on $J$.
\end{itemize}
\end{theorem}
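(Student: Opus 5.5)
The plan is to reduce everything to the classical Floquet picture on the group $(J,*)$. Normalize $\Phi(e_*)=I$, so that $\Phi(T)=V$ by Remark \ref{remarque-Floquet}. Every $t\in J$ can be written uniquely as $t=r*T^{*n}$ with $n\in\mathbb{Z}$ and $r\in[e_*,T)$, using the uniform partition $I_k$. Property (\ref{mono-n}) extends to negative $n$, since $\Phi(t)=\Phi(t*T^{*-1})V$. Hence
$$\Phi(t)=\Phi(r)V^{n},\qquad h(t)=h(r)h(T)^{n},$$
where $\Phi(r)$ and $\Phi(r)^{-1}$ are bounded by some $M\geq1$ on the compact set $[e_*,T]$.

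\textbf{i) $\Rightarrow$ ii).} Let $P$ be the spectral (Riesz) projector of $V$ onto the generalized eigenspace of the eigenvalues inside the unit disk, and set $Q=I-P$. Then $PV=VP$, and there exist $C\geq1$ and $\gamma\in(0,1)$ with
$$\|V^{m}P\|\leq C\gamma^{m},\qquad \|V^{-m}Q\|\leq C\gamma^{m}\qquad (m\geq0).$$
We then use Proposition \ref{ProyCte} with this constant projector. For $t=r*T^{*n}\geq s=r'*T^{*m}$ we have $n\geq m$ and
$$\Phi(t)P\Phi^{-1}(s)=\Phi(r)V^{n-m}P\,\Phi(r')^{-1},$$
so its norm is at most $M^2C\gamma^{n-m}$. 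Choose $\alpha>0$ with $h(T)^{-\alpha}=\gamma$. Then $\gamma^{n-m}=h(T)^{-\alpha(n-m)}$, while
$$\frac{h(t)}{h(s)}=\frac{h(r)}{h(r')}\,h(T)^{n-m}\leq h(T)^{n-m+1},$$
so
$$\gamma^{n-m}\leq h(T)^{\alpha}\left(\frac{h(t)}{h(s)}\right)^{-\alpha}.$$
The $Q$-part is treated symmetrically for $t\leq s$. The constant is $K=M^2Ch(T)^{\alpha}$.

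\textbf{ii) $\Rightarrow$ i).} Suppose instead that $V\xi=\rho\xi$ with $|\rho|=1$ and $\xi\neq0$, taken complex. By Lemma \ref{TF3} the solution $x(t)=\Phi(t)\xi$ satisfies $x(t*T)=\rho x(t)$. Thus $|x(r*T^{*n})|=|x(r)|$ for every $n$, and $x$ is bounded on $J$ by $\max_{[e_*,T]}|x|$. The real and imaginary parts of $x$ are real solutions, both bounded, and at least one is nontrivial. This contradicts Lemma \ref{FDIA}, since uniform $h$--dichotomy on $J$ admits only the trivial bounded solution.

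The main obstacle is the bookkeeping in i) $\Rightarrow$ ii): extending (\ref{mono-n}) to negative powers and comparing $h(t)/h(s)$ with $h(T)^{n-m}$ up to a fixed factor $h(T)^{\pm1}$. A related issue is that $P$, as a spectral projector of a real matrix, is real, because the spectrum inside the disk is conjugation-invariant. Everything else is standard spectral-radius estimates for $V$ restricted to $\mathcal{R}P$ and $V^{-1}$ restricted to $\mathcal{N}P$.
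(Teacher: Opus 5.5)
Your proposal is correct and follows essentially the same route as the paper. You decompose $t=r*T^{*n}$ so that $\Phi(t)=\Phi(r)V^{n}$, and you transfer the discrete dichotomy of $V$ to an $h$--dichotomy through Proposition \ref{ProyCte}; you get that dichotomy via the Riesz projector, while the paper cites the exponential dichotomy of $x_{n+1}=Vx_n$. For the converse, like the paper, you produce a bounded nontrivial solution from a unimodular multiplier via Lemma \ref{TF3}. Your version is slightly more careful than the paper's: you justify (\ref{mono-n}) for negative $n$, check that $P$ is real, and pass to the real and imaginary parts of the complex solution before invoking Lemma \ref{FDIA}.
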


\begin{proof}
If we assume that the property i) is fulfilled, we can study the difference equation
\begin{equation}
\label{difference}
x_{n+1}=Vx_{n}.
\end{equation}

As the eigenvalues of $V$ are not in the unit circle, it can be proved (see \textit{e.g.} \cite{PS}) that the autonomous system (\ref{difference})
has an exponential dichotomy on $\mathbb{Z}$, that is, there exists a constant projector $P^{2}=P$
and positive constants $K_{0}$ and $\alpha$ such that
\begin{equation}
\label{difference2}
\left\{ \begin{array}{rcl}
||V^{n}P[V^{k}]^{-1}||  &\leq & K_{0}e^{-\alpha(n-k)} \quad \textnormal{for any $n\geq k$}\\\\
||V^{n}(I-P)[V^{k}]^{-1}||  &\leq & K_{0}e^{-\alpha(k-n)}  \quad \textnormal{for any $k\geq n$}.
\end{array}\right.
\end{equation}

As it was stated in the subsection 2.2, we can construct the uniform partition for $J$:
$$
J=\bigcup\limits_{n\in \mathbb{Z}}I_{n}  \quad \textnormal{where $J_{n}=[T^{\ast (n-1)},T^{\ast n})$}.
$$

Now, when considering a pair $(t,s)$ with $t>s$, there exists a pair of integers $n,k$ with $n\geq k$ such that 
\begin{equation}
\label{flo4}
s\in [T^{*(k-1)},T^{*k}) \quad \textnormal{and} \quad t\in [T^{*(n-1)},T^{*n}). 
\end{equation}

As $h(\cdot)$ is strictly increasing the properties (\ref{flo4}) combined with (\ref{LP}) implies that
$$
h(T^{*(k-1)})\leq h(s) < h(T^{*k}) \iff h(T)^{k-1}\leq h(s) < h(T)^{k},
$$
then we can deduce that
\begin{equation}
\label{ineg-k}
k-1\leq \frac{\ln (h(s))}{\ln(h(T))}<k,
\end{equation}
whereas the inequality
\begin{equation}
\label{ineg-n}
n-1\leq \frac{\ln (h(t))}{\ln(h(T))}<n,
\end{equation}
can be deduced similarly. As $\alpha>0$, the estimations (\ref{ineg-k})--(\ref{ineg-n}) imply that
\begin{equation}
\label{flo5}
\alpha(k-n)<\alpha + \frac{\alpha}{\ln(h(T))}\ln \left(\frac{h(s)}{h(t)}\right)=\alpha+\ln\left(\frac{h(t)}{h(s)}\right)^{-\frac{\alpha}{\ln(h(T))}}. 
\end{equation}

Now, by (\ref{flo4}) we can deduce that
\begin{equation}
\label{flo6}
s=T^{*(k-1)}*\tilde{s} \quad \textnormal{and} \quad t=T^{*(n-1)}*\tilde{t} \quad \textnormal{for some $\tilde{t},\tilde{s}\in [e_{*},T)$}.
\end{equation}

By using (\ref{mono-n}) we can deduce that
\begin{displaymath}
\Phi(t)=\Phi(\tilde{t}*T^{*(n-1)})=\Phi(\tilde{t})V^{n-1} \quad \textnormal{and} \quad   \Phi(s)=\Phi(\tilde{s}*T^{*(k-1)})=\Phi(\tilde{s})V^{k-1},  
\end{displaymath}
which combined with (\ref{difference2}) and (\ref{flo5}) leads to
\begin{displaymath}
\begin{array}{rcl}
||\Phi(t)P\Phi^{-1}(s)||&=&||\Phi(\tilde{t})V^{n-1}P[V^{k-1}]^{-1}\Phi^{-1}(\tilde{s})|| \\
&\leq  &   K_{0}K_{1}K_{2}e^{-\alpha(n-k)}  \\
&\leq  &   K_{0}K_{1}K_{2}e^{\alpha}\left(\frac{h(t)}{h(s)}\right)^{-\frac{\alpha}{\ln(h(T))}},
\end{array}
\end{displaymath}
where
$$
K_{1}=\max\limits_{u\in [e_{*},T]}||\Phi(u)|| \quad \textnormal{and} \quad K_{2}=\max\limits_{u\in [e_{*},T]}||\Phi^{-1}(u)||.
$$

The case $t<s$ can be addressed similarly and it can be obtained that
\begin{equation}
\label{difference3}
\left\{ \begin{array}{rcl}
||\Phi(t)P\Phi^{-1}(s)||  &\leq & K\left(\frac{h(t)}{h(s)}\right)^{-\tilde{\alpha}} \quad \textnormal{for any $t\geq s$ with $t,s\in J$}\\\\
||\Phi(t)(I-P)\Phi^{-1}(s)||  &\leq & K \left(\frac{h(s)}{h(t)}\right)^{-\tilde{\alpha}} \quad \textnormal{for any $s\geq t$ with $t,s\in J$},
\end{array}\right.
\end{equation}
with $K=K_{0}K_{1}K_{2}e^{\alpha}$ and $\tilde{\alpha}=\alpha/\ln(h(T))$. Now, by using Proposition \ref{ProyCte} we can conclude that the uniform $h$--dichotomy on $J$ follows.

If we assume that the linear system (\ref{lin}) has a uniform $h$--dichotomy, the property i) can be verified by contradiction. 

In fact, if $\rho$ is an $h$--Floquet multiplier with $|\rho|=1$ by Lemma \ref{TF3} we have the existence of
a non trivial solution $t\mapsto x(t)=\Phi(t)\xi$ of (\ref{lin}) such that (\ref{Floquet4}) is verified with
$V\xi=\rho \xi$.

If $t>e_{*}$, there exists $n\in \mathbb{N}$ such that $t\in [T^{*(n-1)},T^{*n})$, then there exists $\tilde{t}\in [e_{*},T)$
such that
$$
x(t)=\Phi(t)\xi=\Phi(T^{*(n-1)}*\tilde{t})\xi=\Phi(\tilde{t})V^{n}\xi=\Phi(\tilde{t})\rho^{n}\xi,
$$
which implies that 
\begin{equation}
\label{acotamiento}
|x(t)|\leq M_{\xi}:=\max\limits_{u\in [e_{*},T]}||\Phi(u)|||\xi| 
\end{equation}
is fulfilled for any $t\geq e_{*}$

We will see now that the bound (\ref{acotamiento}) is fulfilled for any $t\in J$. In fact, if $t\in (T^{*-1},e_{*}]$, then
it follows that $e_{*}<t*T<T$. Then, by using (\ref{Floquet4}) combined with $|\rho|=1$ it follows that $|x(t)|=|x(t*T)|\leq M_{\xi}$
and we can proceed recursively. 

As (\ref{acotamiento}) is verified for any $t\in J$, this implies that $t\mapsto x(t)$ is bounded on $J$, then the 
statement b) from Theorem \ref{FullDico} would imply that $t\mapsto x(t)$ is the trivial solution, obtaining a contradiction and the Theorem follows.
\end{proof}

\begin{corollary}
If the linear system \eqref{lin} fulfills the property \eqref{GFS} and $t\mapsto x(t,t_{0},x_{0})$ is a nontrivial solution of \eqref{lin} passing trough $x_{0}\neq 0$ at $t=t_{0}\in J$,
then 
\begin{itemize}
\item[a)] If all the $h$--Floquet multipliers are inside the unit circle then the origin is uniformly $h$--stable, that is, there exists $K>0$ and $\alpha>0$ such that
\begin{equation}
\label{contract-flo}
|x(t,t_{0},x_{0})|\leq K\left(\frac{h(t)}{h(t_{0})}\right)^{-\alpha}|x_{0}|  \quad \textnormal{for any $t\geq t_{0}$}.  
\end{equation}
\item[b)] If all the $h$--Floquet multipliers are outside the unit circle then the origin is uniformly $h$--unstable, that is,
there exists $K>0$ and $\alpha>0$ such that
\begin{equation}
\label{exand-flo}
|x(t,t_{0},x_{0})|\geq \left(\frac{h(t)}{h(t_{0})}\right)^{\alpha}\frac{|x_{0}|}{K}  \quad \textnormal{for any $t\geq t_{0}$}.  
\end{equation}
\end{itemize}
\end{corollary}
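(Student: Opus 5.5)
The plan is to derive both parts from Theorem \ref{FDE}, specialized to the two extreme cases of the projector. If all $h$--Floquet multipliers lie inside the unit circle, then in particular none lies on it. The proof of Theorem \ref{FDE} then produces, via the exponential dichotomy of the difference equation (\ref{difference}), a constant projector $P$ for which the estimates (\ref{difference3}) hold.

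The first step is to identify this projector. For $x_{n+1}=Vx_{n}$ with spectrum of $V$ strictly inside the unit disk, the dichotomy projector is $P=I$: it is the spectral projector onto the generalized eigenspaces of eigenvalues of modulus $<1$. One may also argue directly, since $\|V^{n}\|\leq K_{0}e^{-\alpha n}$ holds when the spectral radius is less than one, which gives (\ref{difference2}) with $P=I$ and $I-P=0$. Plugging $P=I$ into the first inequality of (\ref{difference3}), written with a fundamental matrix $\Phi$, gives
\begin{displaymath}
|x(t,t_{0},x_{0})|=|\Phi(t)\Phi^{-1}(t_{0})x_{0}|\leq K\left(\frac{h(t)}{h(t_{0})}\right)^{-\tilde\alpha}|x_{0}| \quad \textnormal{for } t\geq t_{0},
\end{displaymath}
which is exactly (\ref{contract-flo}).

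For b), the spectrum of $V$ lies outside the closed unit disk, so $V^{-1}$ has spectral radius less than one, and the dichotomy of (\ref{difference}) holds with $P=0$. The second inequality of (\ref{difference3}) then reads
\begin{displaymath}
\|\Phi(t_{0})\Phi^{-1}(t)\|\leq K\left(\frac{h(t)}{h(t_{0})}\right)^{-\tilde\alpha} \quad \textnormal{for } t\geq t_{0}.
\end{displaymath}
Writing $x_{0}=\Phi(t_{0})\Phi^{-1}(t)x(t,t_{0},x_{0})$, we get $|x_{0}|\leq K(h(t)/h(t_{0}))^{-\tilde\alpha}|x(t,t_{0},x_{0})|$, and rearranging gives (\ref{exand-flo}). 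This is the same manipulation as the right inequality of (\ref{ae1}) in Lemma \ref{SPLIT}, with $Q=I$.

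The only delicate point is justifying that the dichotomy projector in the proof of Theorem \ref{FDE} can be taken to be $I$ (respectively $0$), since that theorem only asserts existence of some $P$. I would settle this by reproving the discrete estimate directly, using $\|V^{m}\|\leq K_{0}e^{-\alpha m}$ (respectively $\|V^{-m}\|\leq K_{0}e^{-\alpha m}$) for $m\geq 0$, which follows from Gelfand's spectral radius formula. I would then repeat the partition argument with the blocks $[T^{*(n-1)},T^{*n})$ and the constants $K_{1},K_{2}$ exactly as in that proof. Alternatively, uniqueness of the range of the dichotomy projector (Lemma \ref{ISF}) forces $\mathcal{R}P$ to be the whole space, respectively trivial, in these two cases.
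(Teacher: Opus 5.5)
Your proposal is correct and follows the paper's own proof. Both invoke Theorem~\ref{FDE} with the discrete dichotomy (\ref{difference2}) holding for $P=I$ (resp.\ $P=0$), then read off (\ref{contract-flo}) and (\ref{exand-flo}) from the resulting $h$--dichotomy, which is exactly the left (resp.\ right) estimate of (\ref{ae1}) in Lemma~\ref{SPLIT}. Your spectral-radius argument for $P=I$ and $P=0$ supplies a justification that the paper only describes as ``easy to see''.
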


\begin{proof}
As the $h$--Floquet multipliers are either inside or outside the unit circle, it follows by Theorem
\ref{FDE} that the system (\ref{lin}) has a uniform $h$--dichotomy. 

If the monodromy matrix $V$ has all its eigenvalues inside the unit circle, it is easy to see that
$P=I$ in \eqref{difference2} and, by following the lines of the proof of Theorem \ref{FDE}, we can conclude 
that the system (\ref{lin}) has a uniform $h$--dichotomy with projector $P=I$. Now, the statement a) follows by using Lemma \ref{SPLIT} combined with the left estimation of (\ref{ae1}).

Similarly, if the monodromy matrix $V$ has all its eigenvalues outside the unit circle, it is easy to see that
$P=0$ in \eqref{difference2} and by following the lines of the proof of Theorem \ref{FDE} we have 
that the system (\ref{lin}) has a uniform $h$--dichotomy with projector $P=0$. Now, the statement a) follows by using Lemma \ref{SPLIT} combined with the right estimation of (\ref{ae1}).
\end{proof}

The above result generalizes Theorem 1 from \cite[p.190]{Burton} which states that the solutions of a generalized Floquet
system (\ref{lin}) are either asymptotically stable or unstable according to the location of the $h$-Floquet multipliers
either inside or outside the unit circle. The originality of our result is that the properties of the group $(J,*)$ allow us to provide
a sharper characterization of the contractions (\ref{contract-flo}) and expansions (\ref{exand-flo}) by noticing that they are dominated
by the growth rate $h(\cdot)$ associated to the group $(J,*)$.

\subsection{An example}
Given $\alpha>0$, let us consider the diagonal system
\begin{displaymath}
\left(\begin{array}{c}
\dot{x}_{1}\\
\dot{x}_{2}
\end{array}\right)=
A(t)
\left(\begin{array}{c}
x_{1}\\
x_{2}
\end{array}\right) \quad \textnormal{with} \quad 
A(t)=\left[\begin{array}{cc}
\displaystyle -\alpha \frac{1}{t}  &  0 \\
0 & \displaystyle \alpha \frac{1}{t}
\end{array}\right]
\quad \textnormal{for any $t\in J=(0,+\infty)$},
\end{displaymath}
which is a particular case of the system (\ref{exemple}) studied in section 2 by considering $J=(0,+\infty)$ and 
the growth rate $h\colon J\to J$ defined by $h(t)=t$.

The corresponding group
operation for this growth rate is $t*s=ts$ and the unit element is $e_{*}=1$ which, given any $T>e_{*}$, allows to deduce that 
\begin{displaymath}
\frac{h'(t)h(T)}{h'(t*T)}A(t*T)=T\left[\begin{array}{cc}
\displaystyle -\alpha \frac{1}{tT}  &  0 \\
0 & \displaystyle \alpha \frac{1}{tT}
\end{array}\right]=\left[\begin{array}{cc}
\displaystyle -\alpha \frac{1}{t}  &  0 \\
0 & \displaystyle \alpha \frac{1}{t}
\end{array}\right]=A(t),
\end{displaymath}
and  the identity
(\ref{GFS}) follows for any $t\in J$.

When considering a fundamental matrix
$\Phi(t)=\textnormal{Diag}\{t^{-\alpha},t^{\alpha}\}$, we can see that $\Phi(e_{*})=I$
and
\begin{displaymath}
\Phi(t*T^{*n})=
\left[\begin{array}{cc}
\frac{1}{(tT^{n})^{\alpha}}  &  0 \\
0 & (tT^{n})^{\alpha}
\end{array}\right]=\left[\begin{array}{cc}
\frac{1}{t^{\alpha}}  &  0 \\
0 & t^{\alpha}
\end{array}\right] \left[\begin{array}{cc}
\frac{1}{T^{\alpha}}  &  0 \\
0 & T^{\alpha}
\end{array}\right]^{n}=\Phi(t)V^{n}
\end{displaymath}
and the identity (\ref{mono-n}) is verified. Moreover, as $T>1$ and $\alpha>0$ it follows that
the eigenvalues of $V$ are $T^{-\alpha}<1<T^{\alpha}$. By Theorem \ref{FDE} we have that the diagonal
system has a uniform $h$--dichotomy on $J=(0,+\infty)$, which is consistent with the more general example studied at the beginning of section 2.
Finally, notice that the difference system $x_{n+1}=Vx_{n}$ has an exponential dichotomy on $\mathbb{Z}$ and the projector
$P(t)$ coincides with the one stated in (\ref{dugma}).

\section*{Declaration of interests}

The authors declare that they have no known competing financial
interests or personal internships that could have appeared to influence the work reported
on the paper.

\section*{Declaration of availability of data}

No data was used for the research described in the article.

\end{document}